	\let\noindent\empty 
\numberwithin{equation}{section}
\theoremstyle{plain}
\newtheorem{lemma}[equation]{Lemma} 
\newtheorem*{lemma*}{Lemma}
\newtheorem{proposition}[equation]{Proposition}
\newtheorem*{proposition*}{Proposition}
\newtheorem{theorem}[equation]{Theorem}
\newtheorem*{theorem*}{Theorem}
\newtheorem{corollary}[equation]{Corollary}
\newtheorem*{corollary*}{Corollary}
\newtheorem*{conjecture*}{Conjecture}
\definecolor{purp}{RGB}{148,30,238}
\definecolor{pinky}{RGB}{255, 174, 252}
\definecolor{purp1}{RGB}{120,30,238}
\definecolor{pinky1}{RGB}{243, 162, 252}
\definecolor{pinky2}{RGB}{252, 165, 249}
\definecolor{pinky3}{RGB}{247, 174, 255}
\definecolor{purp2}{RGB}{189, 144, 249}
\definecolor{purp3}{RGB}{203, 172, 243}
\theoremstyle{remark}
\newtheorem{remark}[equation]{Remark}
\theoremstyle{definition}
\newtheorem{definition}[equation]{Definition}
\newenvironment{enumalph}
{\begin{enumerate}}
	{\end{enumerate}}
\DeclareMathOperator{\coeff}{coeff}
\DeclareMathOperator{\lead}{lead}
\DeclareMathOperator{\term}{term}
\title{A Condition on the Jones Polynomial for a Family of Positive Links}
\author{Lizzie Buchanan}
\address{Department of Mathematics, The University of Iowa, Iowa City, IA 52242}
\email{elizabeth-buchanan@uiowa.edu}
\urladdr{\url{https://sites.google.com/view/lizziebuchanan/}}
\begin{document}
	
	\maketitle
	
	\begin{abstract}
		We provide a bound on the maximum degree of the Jones polynomial of any positive link with second Jones coefficient equal to $\pm 1$. This builds on the result of our previous work, in which we found such a bound for positive fibered links.
	\end{abstract}

	\section{Introduction}
	
	
	A positive knot diagram hints at its own existence by forcing certain properties on its knot polynomials and other invariants. For example, the Conway polynomial of a positive knot is positive (\cite{Cromwell}) and the signature of a positive knot is negative (\cite{przytycki2009positive}). So, any of these properties can be used as a positivity obstruction: if a given knot cannot satisfy these conditions, the knot cannot be positive. 
	
	Most of our positivity obstructions fail, however, to distinguish almost-positive knots from positive knots, because many results about positive knots have been shown to be true for almost-positive knots as well. For example, the Conway polynomial of an almost-positive knot is positive (\cite{Cromwell}) and the signature of an almost-positive knot is negative (\cite{przytycki2009almostpositive}). It is also now known that both positive and almost-positive knots are strongly quasipositive (\cite{Feller_2022}). We refer the reader to \cite{Feller_2022} and \cite{StI} for more information on the many overlapping properties of positive and almost-positive knots.

	In \cite{Buchanan_2022}, we found a condition on the Jones polynomial of a fibered positive knot that can, in some cases, show that a given knot is almost-positive rather than positive. 
	
	\begin{theorem} \cite{Buchanan_2022}\label{main result for coeff 0}
		The Jones polynomial of a fibered positive $n$-component link $L$ satisfies
		$$\max \deg V_L \leq 4 \min \deg V_L + \frac{n-1}{2}.$$
	\end{theorem}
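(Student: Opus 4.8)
\emph{Plan.} The plan is to bound both ends of the Jones polynomial through the Kauffman bracket of a positive braid diagram. First present $L$ as the closure $D=\widehat\beta$ of a positive braid word $\beta$ on $k$ strands with $c$ letters (crossings); for a fibered positive link this is possible — indeed such links are exactly the iterated plumbings of positive Hopf bands, and those are positive braid closures. By Stallings the Bennequin surface of $D$ (the $k$ disks and $c$ bands produced by Seifert's algorithm) is then a fiber surface, so it realizes the genus and $c-k+1=2g+n-1$. Let $s_A,s_B$ be the all-$A$ and all-$B$ states of $D$; the all-$A$ state is the Seifert state, so $|s_A|=k$. Using $t=A^{-4}$, the relation $V_L(A)=(-A)^{-3c}\langle D\rangle$, and the two elementary state-sum estimates $\max\deg_A\langle D\rangle\le c+2(|s_A|-1)$ and $\min\deg_A\langle D\rangle\ge -c-2(|s_B|-1)$, one obtains
$$\min\deg V_L\ \ge\ \frac{c-k+1}{2}\ =\ g+\frac{n-1}{2},\qquad \max\deg V_L\ \le\ c+\frac{|s_B|-1}{2}.$$
Substituting these into the target inequality and eliminating $c$ via $c=2g+n-2+k$, the whole theorem reduces to the purely combinatorial statement that $L$ admits a positive braid diagram $D$ with
$$|s_B(D)|\ \le\ 4g+3n-2k\qquad(\text{equivalently, }g_T(D)\ \ge\ k-g-n).$$

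\emph{The inductive step.} I would prove this bound by induction on $-\chi$ of the fiber, using that every fibered positive link $\neq$ unknot is obtained from a smaller fibered positive link $L''$ by plumbing on a positive Hopf band. Choose the braid word so that this plumbing is visible as a subword $\sigma_i\sigma_i$, and let $\beta''$ be $\beta$ with one of these two letters deleted, $D''=\widehat{\beta''}$. In the Temperley--Lieb picture the all-$B$ state of a braid closure is the closure of the word obtained by $\sigma_j\mapsto e_j$, and since $e_ie_i$ is $e_i$ together with one extra disjoint circle, $|s_B(D)|=1+|s_B(D'')|$. Now $D''$ is again a genus-realizing positive braid diagram, on $k$ strands with $c-1$ crossings; comparing Euler characteristics gives $2(g-g'')+(n-n'')=1$, so either $(n'',g'')=(n-1,g)$ or $(n'',g'')=(n+1,g-1)$. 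Feeding the inductive hypothesis $|s_B(D'')|\le 4g''+3n''-2k$ into $|s_B(D)|=1+|s_B(D'')|$ yields $|s_B(D)|\le 4g+3n-2k$ in both cases, with equality forced precisely in the second (when the deplumbing merges two components). Connected sums are handled separately: $\max\deg$, $\min\deg$, $g$ are additive and $n$ is additive after subtracting $1$, so both the theorem and the combinatorial bound behave additively under $\#$, and one may assume $L$ prime.

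\emph{Main obstacle.} The heart of the proof is the structural input of the inductive step. One must show (i) that a nontrivial prime fibered positive link really does deplumb a positive Hopf band, i.e.\ admits a positive braid word containing some $\sigma_i\sigma_i$ — on three strands, for instance, one rewrites $(\sigma_1\sigma_2)^m$ as $(\sigma_1\sigma_2)^{m-2}\sigma_1^2\sigma_2\sigma_1$ via a braid relation, and in general one must prove such a rewriting is always available unless the word is trivial — and (ii) that the passage $\beta\leadsto\beta''$ never leaves the class of genus-realizing positive braid closures and that the circle removed by $e_ie_i\to e_i$ is the \emph{only} discrepancy between $s_B(D)$ and $s_B(D'')$. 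Since the bound is sharp in the component-merging case there is no slack to absorb an error, which is exactly what makes this bookkeeping delicate. A secondary obstacle is justifying the opening reduction (a fibered positive link is a positive braid closure whose Bennequin surface is the fiber) and, after deleting a letter, recognizing reducibility of $\widehat{\beta''}$ at the level of the braid word so that the connected-sum case can be invoked. Fiberedness is used in two essential ways: it identifies the Bennequin surface with the fiber, pinning down $g$ and hence $\min\deg V_L$, and it supplies the positive Hopf-band descent; for a non-fibered positive link a positive diagram can have many more than $g+n$ Seifert circles, and the combinatorial bound above genuinely fails for such diagrams.
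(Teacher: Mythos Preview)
The theorem you address is quoted from \cite{Buchanan_2022} and is not reproved in the present paper; nonetheless the method used there is visible from how this paper handles the type~$1$ case, and it is quite different from yours. That argument works directly with an \emph{arbitrary} positive diagram $D$: one shows that in a \emph{balanced} type~$0$ diagram (every pair of adjacent Seifert circles shares exactly two crossings) the all-$B$ state has exactly $n$ circles (Theorem~\ref{B=n for Balanced type 0}), and then passes from a general type~$0$ diagram to a balanced one by smoothing $m$ crossings (the burdening number), tracking the effect on $B_D$ and eliminating $m$ via the formula for $\min\deg V_L$. No braid presentation, no plumbing induction, no $\sigma_i^2$ search. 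The payoff of that route is that it generalises verbatim to type~$j$ diagrams for $j\ge 1$, which is precisely what the present paper exploits.

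Your approach---reduce to a positive braid word and induct by deleting one letter from a $\sigma_i\sigma_i$ block---is plausible in outline but rests on two assertions you have not established. First, the opening reduction ``every fibered positive link is a positive braid closure'': your justification (``iterated plumbings of positive Hopf bands, and those are positive braid closures'') is not a proof, since an arbitrary plumbing tree of positive Hopf bands is not \emph{visibly} a closed braid, and converting a positive diagram with tree-shaped reduced Seifert graph into a closed positive braid on the same number of Seifert circles is itself a lemma that needs an argument. Second, the $\sigma_i^2$ rewriting: you correctly flag this as the main obstacle, and it is one---showing that every positive braid word with nontrivial closure admits a cyclic/braid-relation rewriting exposing a square is a real combinatorial statement, not a triviality. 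Both gaps look fillable, but as written the proposal defers the substantive work to exactly these two unproved claims. A further cost of your route is that it is tied to fiberedness: once the reduced Seifert graph has a cycle there is no positive braid presentation to induct on, whereas the balanced-diagram/burdening-number framework of the paper carries over unchanged to the non-fibered case.
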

	
	Among positive links, a fibered positive link is distinguished by having $0$ as the second coefficient of its Jones polynomial, meaning that the $t^{\min\deg V_L + 1}$ term vanishes  \cite{futer2012guts} \cite{Stoimenow}. This paper focuses on the next class of positive links: those characterized by having $\pm 1$ as the second coefficient of their Jones polynomial. We prove the following result. \\
	
	\noindent \textbf{Theorem \ref{main result coeff 1}.} \textit{Let $L$ be a positive link with $n$ link components, Jones polynomial $V_L$, and Conway polynomial $\nabla_L$. If the term $t^{\min\deg V_L + 1}$ appears in $V_L$ with coefficient $\pm 1$, then  $$\max \deg V_L \leq 4 \min \deg V_L + \frac{n-1}{2} + 2\lead \coeff \nabla_L -2, $$
		where $\lead \coeff \nabla_L$ is the coefficient of the highest degree term in the Conway polynomial.}\\

	This result can be used as a positivity obstruction. For example, in Figure \ref{12n149!} we have an almost-positive diagram of knot $12_n149!$, the mirror of the knot listed on the KnotInfo website as $12_n149$. Knot $12_n149!$ has Jones polynomial $t^2 - t^3 + 2t^4 - 2t^5 + 2t^6 - t^7 + t^9 - t^{10} + t^{11} - t^{12}$ and Conway polynomial $1+7z^2 + 2z^4$. This information is listed on KnotInfo and was verified for this particular diagram using Regina \cite{knotinfo} \cite{Regina}. We observe that $12_n149!$ does have $-1$ as the second coefficient of its Jones polynomial and yet its polynomials do not satisfy the inequality in Theorem \ref{main result coeff 1}. Therefore we conclude that $12_n149!$ is not a positive knot. 
	
	\begin{figure}
		\center 	{\includegraphics[width=4.5cm]{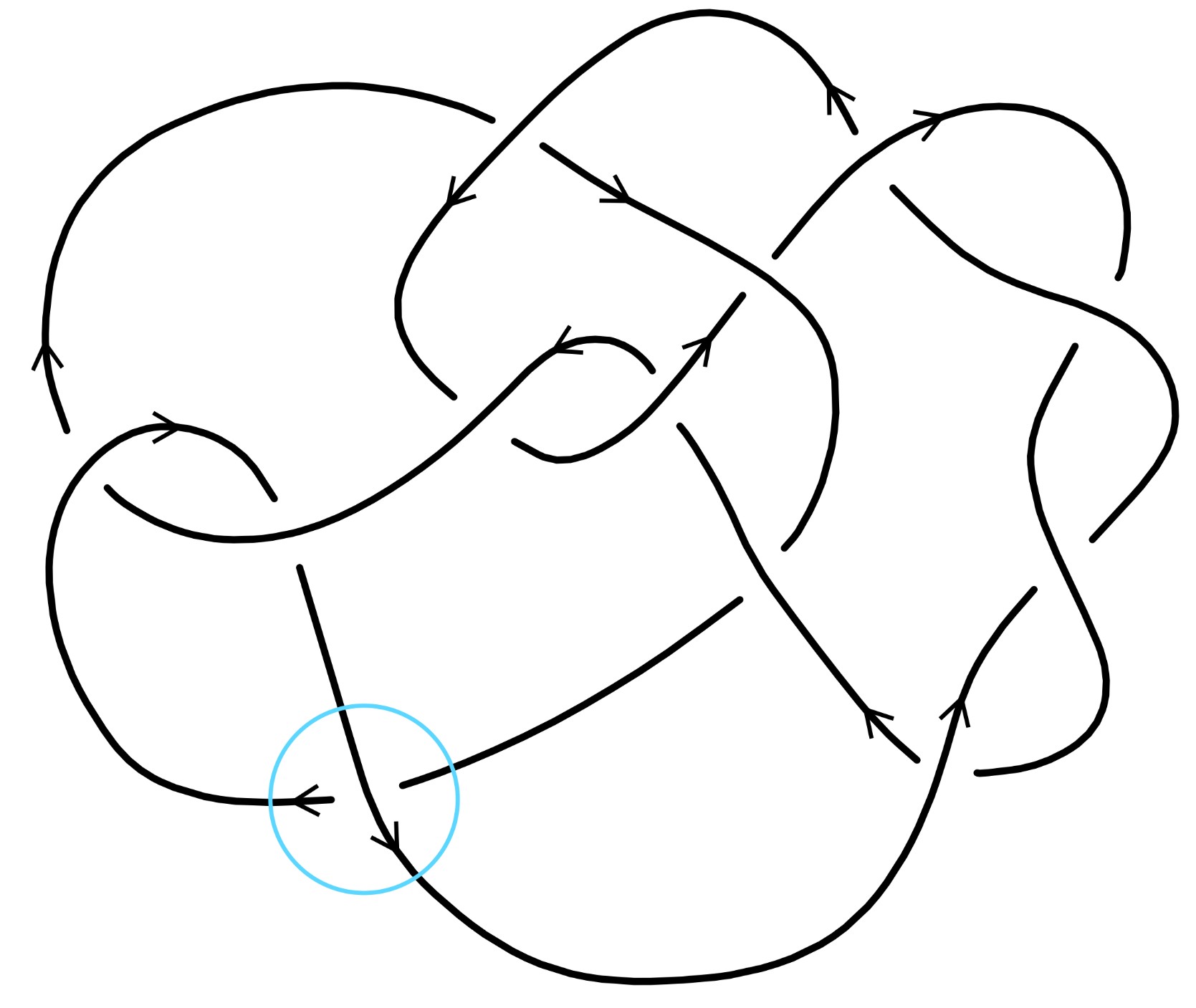}}
		\caption{ An almost-positive diagram of almost-positive knot $12_n149!$. Negative crossing circled. DT Code: [-4, -20, 16, 14, 22, 18, 8, 6, 24, 10, -2, 12]} \label{12n149!}
	\end{figure}
	
	The sequel to this paper will develop a similar positivity obstruction for links with second Jones coefficient equal to $\pm 2$. An earlier version of that paper and this one appears on the arXiv as \cite{Buchanan2023}. There, we also provide an infinite family of examples of almost-positive knots for which Theorem \ref{main result coeff 1} obstructs positivity. 
	
	\subsection*{Acknowledgements}
	The author would like to thank her PhD advisor at the Dartmouth College Department of Mathematics, Vladimir Chernov. Additional thanks to Keiko Kawamuro, Patricia Cahn, and Marithania Silvero for helpful conversations, and to an anonymous reviewer for comments and suggestions. The author is partially supported by NSF Grant DMS-2038103 with Keiko Kawamuro at the Univeristy of Iowa. 
	
	\section{Background and Definitions}

	\begin{definition}
		The \underline{\textbf{second coefficient of the Jones polynomial}} is the coefficient of the $t^{\min\deg V_L + 1}$ term in the Jones polynomial. For example, the Jones polynomial of the trefoil is $V_{3_1}(t) = t + t^3 - t^4$, so we have $\min \deg V_{3_1} = 1$, $\max \deg V_{3_1} = 4$, and the \textbf{second Jones coefficient} is $0$. 
	\end{definition}

	\begin{definition}
		A (oriented) link diagram is called \underline{\textbf{positive}} if every crossing in that diagram is positive, as in Figure \ref{positive and negative crossing}. A link is \underline{\textbf{positive}} if it has a positive diagram.  
	\end{definition}
	
	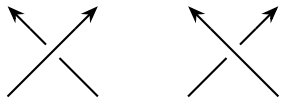
\begin{figure}[h]
		\centering
		\begin{tikzpicture}[knot gap=9, scale=0.6]
			
			\begin{scope}[xshift=5cm]
				\draw[thick, -{Stealth}] (0.5,-0.5) -- (-1,1);
				\draw[thick, -{Stealth}] (-1,-1) -- (1,1);
				\draw[thick, knot] (1,-1) -- (-0.5,0.5);
				\draw[thick, knot] (-1,-1) -- (0.5,0.5);
			\end{scope}
			
			\begin{scope}[xshift=9cm]
				\draw[thick, -{Stealth}] (0.5,-0.5) -- (-1,1);
				\draw[thick, -{Stealth}] (-1,-1) -- (1,1);
				\draw[thick, knot] (-1,-1) -- (0.5,0.5);
				\draw[thick, knot] (1,-1) -- (-0.5,0.5);
			\end{scope}
			
		\end{tikzpicture} 
		\caption{ A positive crossing (left) and a negative crossing (right)} \label{positive and negative crossing}
	\end{figure}

	\begin{definition}
		At any crossing in any link diagram $D$, we can perform an \underline{\textbf{$A$-smoothing}} or a \underline{\textbf{$B$-smoothing}}, shown in Figure \ref{A and B smoothing}. Performing $A$-smoothings on every crossing in the diagram results in an arrangement of circles known as the \underline{\textbf{$A$-state}}. The set of circles that make up the $A$-state are called the \underline{\textbf{$A$-circles}}. 
		We then can consider the associated \underline{\textbf{$A$-state graph}} of the diagram $D$: Every $A$-circle in $D$ corresponds to a vertex in the $A$-state graph, and every crossing in $D$ corresponds to an edge in the graph, as in Figure \ref{reduced A-state graph example}. The \underline{\textbf{reduced $A$-state graph}} is the $A$-state graph with duplicate edges removed. 
	\end{definition}
	
	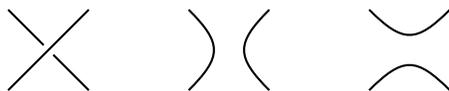
\begin{figure}[h]
    \centering
    \begin{tikzpicture}[every path/.style={thick}, every
node/.style={transform shape, knot crossing, inner sep=2.75pt}, scale=0.6]
    \node (tl) at (-1, 1) {};
    \node (tr) at (1, 1) {};
    \node (bl) at (-1, -1) {};
    \node (br) at (1, -1) {};
    \node (c) at (0,0) {};

    \draw (bl) -- (tr);
    \draw (br) -- (c);
    \draw (c) -- (tl);
    
    \begin{scope}[xshift=4cm]
    \node (tl) at (-1, 1) {};
    \node (tr) at (1, 1) {};
    \node (bl) at (-1, -1) {};
    \node (br) at (1, -1) {};

    \draw (bl) .. controls (bl.8 north east) and (tl.8 south east) .. (tl);
    \draw (br) .. controls (br.8 north west) and (tr.8 south west) .. (tr);
    \end{scope}

    \begin{scope}[xshift=8cm]
    \node (tl) at (-1, 1) {};
    \node (tr) at (1, 1) {};
    \node (bl) at (-1, -1) {};
    \node (br) at (1, -1) {};

    \draw (bl) .. controls (bl.8 north east) and (br.8 north west) .. (br);
    \draw (tl) .. controls (tl.8 south east) and (tr.8 south west) .. (tr);
    \end{scope}

    \end{tikzpicture}
    \caption{ A crossing (left), its $A$-smoothing (middle), and $B$-smoothing (right)}
    \label{A and B smoothing}
\end{figure}
	

\begin{figure}[h]
    \centering
    \begin{tikzpicture}[every path/.style={thick}, every
node/.style={transform shape, knot crossing, inner sep=2pt}]

\begin{scope}[scale=0.75]
 
    \begin{scope}[xshift=0cm, scale=0.8]
    \node (tlo) at (-6.25,2.25) {};
    \node (tro) at (-3.75,2.25) {};
    \node (so) at (-5,1.25) {};
    \node (mlo) at (-5.5,0) {};
    \node (mro) at (-4.5, 0) {};
    \node (bo) at (-5,-1.25) {};
    \draw (bo.center) .. controls (bo.4 north west) and (mlo.4 south west) ..
    (mlo.center);
    \draw  (bo) .. controls (bo.4 north east) and (mro.4 south east).. (mro);
    \draw  [-{Stealth}] (mlo) .. controls (mlo.8 north west) and (so.3 south west) ..
    (so);
    \draw [-{Stealth}] (mlo.center) .. controls (mlo.8 north east) and (mro.2 north
    west) .. (mro);
    \draw  (mro.center) .. controls (mro.4 north east) and (so.8 south east) ..
    (so.center);
    \draw (mro.center) .. controls (mro.8 south west) and (mlo.3 south east) ..
    (mlo);
    \draw (so) .. controls (so.4 north east) and (tro.8 south west) .. (tro);
    \draw  (so.center) .. controls (so.8 north west) and (tlo.8 south
    east) .. (tlo.center);
    \draw (tlo.center) .. controls (tlo.16 north west) and (tro.16 north
    east) .. (tro);
    \draw (bo.center) .. controls (bo.16 south east) and (tro.16 south east) ..
    (tro.center);
    \draw (bo) .. controls (bo.16 south west) and (tlo.16 south
    west) .. (tlo);
    \draw (tlo) .. controls (tlo.4 north east) and (tro.4 north
    west) .. (tro.center);
    \end{scope}
    
    \begin{scope}[xshift=-0.75cm, scale=0.8]
    \node (tl) at (-1.25,2.25) {};
    \node (tltl) at (-1.3,2.3) {};
    \node (tlbl) at (-1.3,2.2) {};
    \node (tltr) at (-1.2,2.3) {};
    \node (tlbr) at (-1.2,2.2) {};
    \node (tr) at (1.25,2.25) {};
    \node (trtl) at (1.2,2.3) {};
    \node (trbl) at (1.2,2.2) {};
    \node (trtr) at (1.3,2.3) {};
    \node (trbr) at (1.3,2.2) {};
    \node (stl) at (-0.05,1.3) {};
    \node (str) at (0.05,1.3) {};
    \node (sbl) at (-0.05,1.2) {};
    \node (sbr) at (0.05,1.2) {};
    \node (ml) at (-0.5,0) {};
    \node (mltl) at (-0.55,0.05) {};
    \node (mlbl) at (-0.55,-0.05) {};
    \node (mltr) at (-0.45,0.05) {};
    \node (mlbr) at (-0.45,-0.05) {};
    \node (mr) at (0.5, 0) {};
    \node (mrtl) at (0.45, 0.05) {};
    \node (mrbl) at (0.45, -0.05) {};
    \node (mrtr) at (0.55, 0.05) {};
    \node (mrbr) at (0.55, -0.05) {};
    \node (b) at (0,-1.25) {};
    \node (btl) at (-0.05, -1.2) {};
    \node (bbl) at (-0.05, -1.3) {};
    \node (btr) at (0.05, -1.2) {};
    \node (bbr) at (0.05, -1.3) {};
    
    \draw [gray] (btl.center) .. controls (btl.4 north west) and (mlbl.4 south west) ..
    (mlbl.center);
    \draw [gray] (btr.center) .. controls (btr.4 north east) and (mrbr.4 south east)
    .. (mrbr.center);
    \draw [gray] (mltl.center) .. controls (mltl.8 north west) and (sbl.3 south west) ..
    (sbl.center);
    \draw [green] (mltr.center) .. controls (mltr.4 north east) and (mrtl.4 north
    west) .. (mrtl.center);
    \draw [gray] (mrtr.center) .. controls (mrtr.4 north east) and (sbr.8 south east) ..
    (sbr.center);
    \draw [green] (mrbl.center) .. controls (mrbl.4 south west) and (mlbr.4 south east) ..
    (mlbr.center);
    \draw [blue] (str.center) .. controls (str.8 north east) and (trbl.8 south west) .. (trbl.center);
    \draw [blue] (stl.center) .. controls (stl.8 north west) and (tlbr.8 south
    east) .. (tlbr.center);
    \draw [magenta] (tltl.center) .. controls (tltl.16 north west) and (trtr.16 north
    east) .. (trtr.center);
    \draw [blue] (bbr.center) .. controls (bbr.16 south east) and (trbr.16 south east) ..
    (trbr.center);
    \draw [blue] (bbl.center) .. controls (bbl.16 south west) and (tlbl.16 south
    west) .. (tlbl.center);
    \draw [magenta] (tltr.center) .. controls (tltr.4 north east) and (trtl.4 north
    west) .. (trtl.center);
    
    \draw [blue] (stl.center) -- (str.center) {};
    \draw [gray] (sbl.center) -- (sbr.center) {};
    \draw [gray] (mltl.center) -- (mlbl.center) {};
    \draw [green] (mltr.center) -- (mlbr.center) {};
    \draw [green] (mrtl.center) -- (mrbl.center) {};
    \draw [gray] (mrtr.center) -- (mrbr.center) {};
    \draw [magenta] (tltl.center) -- (tltr.center) {};
    \draw [blue] (tlbl.center) -- (tlbr.center) {};
    \draw [magenta] (trtl.center) -- (trtr.center) {};
    \draw [blue] (trbl.center) -- (trbr.center) {};
    \draw [gray] (btl.center) -- (btr.center) {};
    \draw [blue] (bbl.center) -- (bbr.center) {};
    \end{scope}

    
    \begin{scope}[xshift=2cm, scale =0.8]
    \node (magenta) at (0, 2.75) {};
    \node (m) at (0, 3) {};
    \node (blue) at (0, 1) {};
    \node (c) at (0, .75) {};
    \node (cr) at (0.25, 0.75) {};
    \node (orange) at (1.5, 0.75) {};
    \node (o) at (1.75, 0.75) {};
    \node (ou) at (1.75, 0.5) {};
    \node (ol) at (1.5, 0.75) {};
    \node (green) at (1.75, -1) {};
    \node (g) at (1.75, -1.25) {};

     \filldraw [color=magenta, fill=magenta!15] (m) circle[radius=0.32cm];
    \filldraw [color=blue, fill=blue!15] (c) circle[radius=0.32cm];
    \filldraw [color=gray, fill=gray!20] (o) circle[radius=0.32cm];
    \filldraw [color=green, fill=green!15] (g) circle[radius=0.32cm];
    \draw (blue) .. controls (blue.3 north east) and (magenta.3 south east) .. (magenta);
    \draw (blue) .. controls (blue.3 north west) and (magenta.3 south west) .. (magenta);
    \draw (orange) .. controls (orange.3 north west) and (cr.3 north east) .. (cr);
    \draw (orange) .. controls (orange.3 south west) and (cr.3 south east) .. (cr);

    \draw (green) .. controls (green.3 north west) and (ou.3 south west) .. (ou);
    \draw (green) .. controls (green.3 north east) and (ou.3 south east) .. (ou);
    \end{scope}
    
    \begin{scope}[xshift=4.65cm, scale=0.8]
    \node (magenta) at (0, 2.75) {};
    \node (m) at (0, 3) {};
    \node (cyan) at (0, 1) {};
    \node (c) at (0, .75) {};
    \node (cr) at (0.25, 0.75) {};
    \node (orange) at (1.5, 0.75) {};
    \node (o) at (1.75, 0.75) {};
    \node (ou) at (1.75, 0.5) {};
    \node (green) at (1.75, -1) {};
    \node (g) at (1.75, -1.25) {};
    
   \filldraw [color=magenta, fill=magenta!15] (m) circle[radius=0.32cm];
    \filldraw [color=blue, fill=blue!15] (c) circle[radius=0.32cm];
    \filldraw [color=gray, fill=gray!20] (o) circle[radius=0.32cm];
    \filldraw [color=green, fill=green!15] (g) circle[radius=0.32cm];
    \draw (cyan) -- (magenta);
    \draw (orange) -- (cr);
    \draw (green) -- (ou);
    \end{scope}

\end{scope}
\end{tikzpicture} 
\caption{ (Left to right:) A positive link diagram, its $A$-circles, its $A$-state graph, and its reduced $A$-state graph }
\label{reduced A-state graph example}
\end{figure}
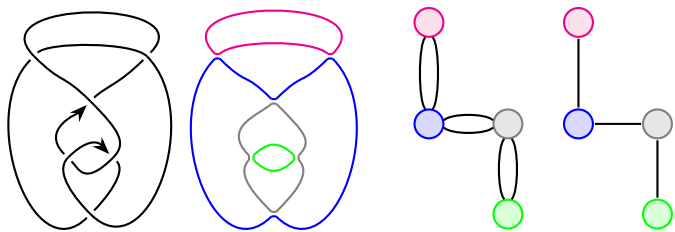

	Throughout this paper we assume we are working with non-split links and we use the following notation. 
	\begin{itemize}
		\item $c(D)$ is the number of crossings in diagram $D$
		\item $n(D)$ is the number of link components
		\item $A_D$ is the number of $A$-circles 
		\item $B_D$ is the number of $B$-circles
		\item $\nabla$ is the Conway polynomial and $\nabla_L$ is the Conway polynomial of link $L$
		\item $V$ is the Jones polynomial and $V_L$ is the Jones polynomial of link $L$
	\end{itemize}

	\begin{remark}
		In a positive diagram, $A$-smoothings are equivalent to smoothing according to Seifert's algorithm. So in a positive diagram, its $A$-circles are exactly the same as its Seifert circles.
	\end{remark}
	
	\begin{definition}
		The \underline{\textbf{cyclomatic number}} $p_1(G)$ of a connected graph $G$ is $p_1(G)= \#\text{edges} - \#\text{vertices} + 1$. The subscript $1$ refers to the fact that this is the first Betti number of the graph. 
	\end{definition}
	
	\begin{theorem}\label{second Jones coeff counts holes}(Stoimenow)\cite{Stoimenow} 
		
		Let $L$ be a positive link with positive diagram $D$. Then the second coefficient $V_1$ of the Jones polynomial satisfies:
		$$(-1)^{n(L)-1}V_1=\#(\text{Seifert circles})-1-\#(\text{pairs of Seifert circles that share at least one crossing}).$$ 
		
	\end{theorem}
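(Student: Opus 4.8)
The strategy is to read off the two top $A$-degree coefficients of the Kauffman bracket $\langle D\rangle$ from its state sum and then pass to the Jones polynomial via $V_L(t)=\bigl((-A)^{-3w(D)}\langle D\rangle\bigr)\big|_{A=t^{-1/4}}$, using $w(D)=c(D)$ since $D$ is positive. Write $c=c(D)$, $s=A_D$ for the number of Seifert (= $A$-) circles, and
\[
\langle D\rangle=\sum_{T}A^{\,c-2|T|}\,(-A^{2}-A^{-2})^{\,|S(T)|-1},
\]
where $T$ runs over subsets of the crossings ($T$ = the crossings given a $B$-smoothing, all others an $A$-smoothing) and $|S(T)|$ is the number of loops of that state. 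Since $D$ is positive, $S(\emptyset)$ is the Seifert state, so $|S(\emptyset)|=s$, the $A$-state graph has vertex set the Seifert circles and one edge per crossing, and its reduced version has $s$ vertices and $e'$ edges, where $e'$ is exactly the number of pairs of Seifert circles sharing at least one crossing.

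First I would do the degree bookkeeping. Adding one $B$-smoothing to any state changes the loop count by exactly $\pm1$, so $|S(T)|-|T|$ begins at $s$ and drops by $0$ or $2$ at each step; hence $|S(T)|-|T|\le s$ and $|S(T)|-|T|\equiv s\pmod 2$ for all $T$. Moreover a positive diagram is $A$-adequate — no crossing has both strands on the same Seifert circle (standard; e.g.\ positive diagrams are homogeneous, hence adequate, after Cromwell) — so the first $B$-smoothing is always a merge, forcing $|S(T)|-|T|=s$ only for $T=\emptyset$. The term of $T$ occupies $A$-degrees in $[\,c-2|T|-2(|S(T)|-1),\ c-2|T|+2(|S(T)|-1)\,]$, so the top $A$-degree of $\langle D\rangle$ is $c+2s-2$, attained only by $T=\emptyset$; by parity no state reaches $A$-degree $c+2s-4$; and the coefficient of $A^{\,c+2s-6}$ in $\langle D\rangle$ gets contributions only from $T=\emptyset$ — namely the $j=1$ term $(-1)^{s-1}\binom{s-1}{1}=(-1)^{s-1}(s-1)$ of $A^{c}(-A^{2}-A^{-2})^{s-1}$ — together with the states having $|S(T)|-|T|=s-2$, each of which contributes its leading coefficient $(-1)^{|S(T)|-1}$.

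The combinatorial heart is to identify those states. Using $A$-adequacy (the first $B$-smoothing merges) and a short induction on $|T|$ — at each later step the smoothed crossing must again join two pieces of a single circle, which can only happen if it is parallel to the earlier ones — I would show the states with $|S(T)|-|T|=s-2$ are exactly the nonempty \emph{parallel bunches}: a choice of a pair $\{C_i,C_j\}$ of Seifert circles and a nonempty subset of the $m_{ij}$ crossings between them; and that such a bunch of size $\ell$ satisfies $|S(T)|=s+\ell-2$ (the local $\sigma_1^{\ell}$ computation: $B$-smoothing $\ell$ mutually parallel bands between two embedded circles turns them into $\ell$ circles). Equivalently one invokes the standard identity $|S(T)|=s-|T|+2\,p_1(G_T)$ for the Seifert state (valid because the Seifert circles bound disjoint disks in the plane), for which $|S(T)|-|T|=s-2$ forces $G_T$ to span two vertices. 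Summing leading coefficients, a single pair $\{C_i,C_j\}$ with $m:=m_{ij}\ge1$ contributes
\[
\sum_{\ell=1}^{m}\binom{m}{\ell}(-1)^{(s+\ell-2)-1}=(-1)^{s-3}\sum_{\ell=1}^{m}\binom{m}{\ell}(-1)^{\ell}=(-1)^{s-3}\bigl((1-1)^{m}-1\bigr)=-(-1)^{s-1},
\]
so with $e'$ such pairs the coefficient of $A^{\,c+2s-6}$ in $\langle D\rangle$ is $C:=(-1)^{s-1}(s-1)-e'(-1)^{s-1}=(-1)^{s-1}(s-1-e')$.

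Finally I would convert back. From $V_L=(-A)^{-3c}\langle D\rangle=(-1)^{c}A^{-3c}\langle D\rangle$ and $t=A^{-4}$, the top $A$-degree of $\langle D\rangle$ gives $\min\deg V_L=\tfrac12(c-s+1)$, and the second Jones coefficient $V_1$ (the coefficient of $t^{\min\deg V_L+1}$) is $(-1)^{c}$ times the coefficient of $A^{\,c+2s-6}$ in $\langle D\rangle$, i.e.\ $V_1=(-1)^{c}C$. The Seifert surface built from $D$ has Euler characteristic $s-c=2-2g-n(L)$, so $c-s\equiv n(L)\pmod2$ and hence $(-1)^{\,n(L)-1}(-1)^{c}=(-1)^{\,s-1}$; therefore $(-1)^{\,n(L)-1}V_1=(-1)^{\,s-1}C=s-1-e'$, which is exactly the asserted identity. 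The one genuinely nontrivial step is the combinatorial lemma of the third paragraph — determining precisely which states satisfy $|S(T)|-|T|=s-2$ and their loop counts; the rest is bookkeeping in the state sum together with the parity check at the end.
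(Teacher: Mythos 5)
The paper does not prove this statement at all --- it is imported verbatim from Stoimenow's work \cite{Stoimenow} --- so there is no internal proof to compare against; your Kauffman-bracket state-sum computation is essentially the standard argument for results of this type, and I believe it is correct and complete in outline. The degree bookkeeping, the identification of the contributing states, the binomial cancellation $\sum_{\ell\ge 1}\binom{m}{\ell}(-1)^{\ell}=-1$ for each adjacent pair of Seifert circles, and the final parity conversion via $s-c\equiv n\pmod 2$ all check out. Two small cautions. First, your parenthetical alternative --- ``one invokes the standard identity $|S(T)|=s-|T|+2p_1(G_T)$ \ldots valid because the Seifert circles bound disjoint disks in the plane'' --- is not valid as stated: Seifert circles may be nested, and the identity genuinely fails for large $T$ (e.g.\ for $T$ equal to all crossings of the closure of the positive braid $\sigma_1\sigma_2\sigma_1\sigma_2$, where it predicts $3$ circles in the all-$B$ state but there is only $1$; the correction term is twice the genus of the associated ribbon graph, i.e.\ the Turaev genus). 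Only the inequality $|S(T)|\le s-|T|+2p_1(G_T)$, equivalently ``merges $\ge |V(G_T)|-\omega(G_T)$,'' holds in general, and fortunately that is the only direction your characterization needs; your primary step-by-step merge/split induction is the argument that actually carries the proof, and it is sound --- in particular the fact that all crossings joining a fixed pair $\{C_i,C_j\}$ sit as parallel bands in the single annular complementary region of $C_i\sqcup C_j$ adjacent to both, so that $B$-smoothing $\ell$ of them always yields exactly $s+\ell-2$ circles. Second, the justification of $A$-adequacy should be the direct fact that the Seifert graph of any diagram is loopless (it is bipartite via nesting depth), rather than the detour through homogeneity.
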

	
	So for any positive diagram $D$, the absolute value of the second Jones coefficient is exactly the cyclomatic number of the reduced $A$-state graph of $D$. Hence any two positive diagrams $D$ and $D'$ of the same link $L$ will have reduced $A$-state graphs with idenitical cyclomatic numbers, and we can define the cyclomatic number of a positive link as in \cite{PS2020} and \cite{kegel2023khovanovhomologypositivelinks}.

	\begin{definition}
		The \underline{\textbf{cyclomatic number}} $p_1(L)$ of a positive link is the cyclomatic number of the reduced $A$-state graph of any positive diagram of $L$. 
	\end{definition}
	
	\begin{remark}
		Observe that if $G$ is a planar graph, then $p_1(G)$ is the number of interior faces. The reduced $A$-state graph of positive link diagram will always be planar. So the cyclomatic number $p_1(L)$ of a positive link $L$ counts the number of interior faces in the reduced $A$-state graph of any positive diagram $D$ of the link. 
	\end{remark}
	
	\begin{definition}
		A link diagram is \underline{\textbf{reduced}} if it has no nugatory crossings. 
	\end{definition}
	
	\begin{definition}
		A \underline{\textbf{type $j$ diagram}} is a positive, reduced diagram of a link $L$ with cyclomatic number $p_1(L)=j$.  Its second Jones coefficient is equal to $\pm j$. Figure \ref{reduced A-state graph example} shows a type $0$ diagram, and a type $1$ diagram is shown in Figure \ref{fig:Burdened type 1 example}. 
	\end{definition}

	\begin{figure}[h]
    \centering
  \begin{tikzpicture}
      \begin{scope}[scale=0.56, >=Stealth]

      \begin{scope}[scale=0.8, xshift=-6cm, rotate=45]
       \node (a) at (-3,0){};
       \node (b) at (-1,0){};
       \node (c) at (0,3) {};
       \node (d) at (0,1){};
       \node (e) at (1,0) {};
       \node (f) at (3,0){};
       \node (g) at (3,-3) {};
       \node (h) at (0, -3){};
       \node (i) at (-3,-3){};
       \node (j) at (-3, -2){};
       \node (k) at (-1, -2){};
       

       \draw [out=90, in=90, relative, decoration={markings, mark=at position 0.5 with {\arrow{<}}}, postaction={decorate}] 
       (a.center) to (c);
       \draw [out=90, in=90, relative, decoration={markings, mark=at position 0.5 with {\arrow{>}}}, postaction={decorate}] 
       (c.center) to (f);
       \draw [out=45, in=135, relative, decoration={markings, mark=at position 0.5 with {\arrow{}}}, postaction={decorate}] 
       (f.center) to (g.center);
       \draw [out=45, in=135, relative, decoration={markings, mark=at position 0.5 with {\arrow{<}}}, postaction={decorate}] 
       (g.center) to (h);
       \draw [out=45, in=135, relative, decoration={markings, mark=at position 0.5 with {\arrow{>}}}, postaction={decorate}] 
       (h.center) to (i.center);
       \draw [out=45, in=135, relative, decoration={markings, mark=at position 0.5 with {\arrow{}}}, postaction={decorate}] 
       (i.center) to (j.center);
       \draw [out=45, in=135, relative, decoration={markings, mark=at position 0.5 with {\arrow{>}}}, postaction={decorate}] 
       (j) to (a);

       \draw [ decoration={markings, mark=at position 0.5 with {\arrow{>}}}, postaction={decorate}] 
       (b) to (d.center);
       \draw [ decoration={markings, mark=at position 0.5 with {\arrow{>}}}, postaction={decorate}] 
       (e.center) to (d);
       \draw [out=-30, in=170, relative, decoration={markings, mark=at position 0.5 with {\arrow{<}}}, postaction={decorate}] 
       (h.center) to (e);
       \draw [decoration={markings, mark=at position 0.5 with {\arrow{<}}}, postaction={decorate}] 
       (h) to (k);
       \draw [out=40, in=140, relative, decoration={markings, mark=at position 0.5 with {\arrow{>}}}, postaction={decorate}] 
       (b.center) to (k.center);


       \draw [out=45, in=135, relative, decoration={markings, mark=at position 0.5 with {\arrow{>}}}, postaction={decorate}] 
       (a) to (b.center);
       \draw [out=45, in=140, relative, decoration={markings, mark=at position 0.5 with {\arrow{<}}}, postaction={decorate}] 
       (b) to (a.center);
       \draw [out=45, in=140, relative, decoration={markings, mark=at position 0.5 with {\arrow{>}}}, postaction={decorate}] 
       (d) to (c.center);
       \draw [out=45, in=135, relative, decoration={markings, mark=at position 0.5 with {\arrow{<}}}, postaction={decorate}] 
       (c) to (d.center);
       \draw [out=45, in=140, relative, decoration={markings, mark=at position 0.5 with {\arrow{>}}}, postaction={decorate}] 
       (f) to (e.center);
       \draw [out=45, in=135, relative, decoration={markings, mark=at position 0.5 with {\arrow{<}}}, postaction={decorate}] 
       (e) to (f.center);
       \draw [out=45, in=135, relative, decoration={markings, mark=at position 0.5 with {\arrow{>}}}, postaction={decorate}] 
       (j.center) to (k);
       \draw [out=45, in=135, relative, decoration={markings, mark=at position 0.5 with {\arrow{>}}}, postaction={decorate}] 
       (k.center) to (j);
       \end{scope}

      \begin{scope}[scale=0.8, xshift=2.5cm, rotate=45]
      	
       \node (a) at (-3,0){};
       \node (b) at (-1,0){};
       \node (c) at (0,3) {};
       \node (d) at (0,1){};
       \node (e) at (1,0) {};
       \node (f) at (3,0){};
       \node (g) at (3,-3) {};
       \node (h) at (0, -3){};
       \node (i) at (-3,-3){};
       \node (j) at (-3, -2){};
       \node (k) at (-1, -2){};
       

       \draw [red, thick, out=90, in=90, relative, decoration={markings, mark=at position 0.5 with {\arrow{}}}, postaction={decorate}] 
       (a.center) to (c);
       \draw [blue, thick, out=90, in=90, relative, decoration={markings, mark=at position 0.5 with {\arrow{}}}, postaction={decorate}] 
       (c.center) to (f);
       \draw [orange, thick, out=45, in=135, relative, decoration={markings, mark=at position 0.5 with {\arrow{}}}, postaction={decorate}] 
       (f.center) to (g.center);
       \draw [orange, thick, out=45, in=135, relative, decoration={markings, mark=at position 0.5 with {\arrow{}}}, postaction={decorate}] 
       (g.center) to (h);
       \draw [purp, thick, out=45, in=135, relative, decoration={markings, mark=at position 0.5 with {\arrow{}}}, postaction={decorate}] 
       (h.center) to (i.center);
       \draw [purp, thick, out=45, in=135, relative, decoration={markings, mark=at position 0.5 with {\arrow{}}}, postaction={decorate}] 
       (i.center) to (j.center);
       \draw [purp, thick, out=45, in=135, relative, decoration={markings, mark=at position 0.5 with {\arrow{}}}, postaction={decorate}] 
       (j) to (a);

       \draw [red, thick, decoration={markings, mark=at position 0.5 with {\arrow{}}}, postaction={decorate}] 
       (b) to (d.center);
       \draw [blue, thick, decoration={markings, mark=at position 0.5 with {\arrow{}}}, postaction={decorate}] 
       (e.center) to (d);
       \draw [orange, thick, out=-30, in=170, relative, decoration={markings, mark=at position 0.5 with {\arrow{}}}, postaction={decorate}] 
       (h.center) to (e);
       \draw [purp, thick, decoration={markings, mark=at position 0.5 with {\arrow{}}}, postaction={decorate}] 
       (h) to (k);
       \draw [purp, thick, out=40, in=140, relative, decoration={markings, mark=at position 0.5 with {\arrow{}}}, postaction={decorate}] 
       (b.center) to (k.center);


       \draw [red, thick, out=45, in=135, relative, decoration={markings, mark=at position 0.5 with {\arrow{}}}, postaction={decorate}] 
       (a) to (b.center);
       \draw [purp, thick, out=45, in=140, relative, decoration={markings, mark=at position 0.5 with {\arrow{}}}, postaction={decorate}] 
       (b) to (a.center);
       \draw [red, thick, out=45, in=140, relative, decoration={markings, mark=at position 0.5 with {\arrow{}}}, postaction={decorate}] 
       (d) to (c.center);
       \draw [blue, thick, out=45, in=135, relative, decoration={markings, mark=at position 0.5 with {\arrow{}}}, postaction={decorate}] 
       (c) to (d.center);
       \draw [orange, thick, out=45, in=140, relative, decoration={markings, mark=at position 0.5 with {\arrow{}}}, postaction={decorate}] 
       (f) to (e.center);
       \draw [blue, thick, out=45, in=135, relative, decoration={markings, mark=at position 0.5 with {\arrow{}}}, postaction={decorate}] 
       (e) to (f.center);
       \draw [green, thick, out=45, in=135, relative, decoration={markings, mark=at position 0.5 with {\arrow{}}}, postaction={decorate}] 
       (j.center) to (k);
       \draw [green, thick, out=45, in=135, relative, decoration={markings, mark=at position 0.5 with {\arrow{}}}, postaction={decorate}] 
       (k.center) to (j);
       \end{scope}
   

  \begin{scope}[scale = 1, xshift=7.75cm, yshift=0.5cm, rotate = 45]
  
       \node (a) at (-1,1){};
       \node (b) at (1,1){};
       \node (c) at (1,-1) {};
       \node (d) at (-1,-1){};
       \node (e) at (-2, -2) {};
        \draw[]
        (a) -- (b)
        (b) -- (c) 
        (d) -- (a)
        (c) -- (d);
        \draw[out=20, in=160, relative]
        (a) to (b)
        (b) to (c)
        (d) to (a)
        (d) to (e)
        (e) to (d);
        \draw[out=20, in=160, relative]
        (a) to (b)
        (b) to (c)
        (d) to (a);

        \filldraw[color=red, fill=red!10] 
        (a) circle (1/3);
     \filldraw[color=blue, fill=blue!10] 
        (b) circle (1/3);
     \filldraw[color=orange, fill=orange!10] 
        (c) circle (1/3);
     \filldraw[color=purp, fill=purp!10] 
        (d) circle (1/3);
     \filldraw[color=green, fill=green!10] 
        (e) circle (1/3);
        
       \end{scope}

  \begin{scope}[scale = 1, xshift=12cm, yshift=0.5cm, rotate = 45]
  
       \node (a) at (-1,1){};
       \node (b) at (1,1){};
       \node (c) at (1,-1) {};
       \node (d) at (-1,-1){};
       \node (e) at (-2, -2) {};
        \draw[]
        (a) -- (b)
        (b) -- (c) 
        (d) -- (a)
        (c) -- (d)
        (d) -- (e);

        \filldraw[color=red, fill=red!10] 
        (a) circle (1/3);
     \filldraw[color=blue, fill=blue!10] 
        (b) circle (1/3);
     \filldraw[color=orange, fill=orange!10] 
        (c) circle (1/3);
     \filldraw[color=purp, fill=purp!10] 
        (d) circle (1/3);
     \filldraw[color=green, fill=green!10] 
        (e) circle (1/3);
        
       \end{scope}

       
       \end{scope}
       
  \end{tikzpicture}
  \vspace{-5pt}
     \caption{A type $1$ diagram, its $A$-circles, $A$-state graph, and reduced $A$-state graph}
    \label{fig:Burdened type 1 example}
\end{figure}
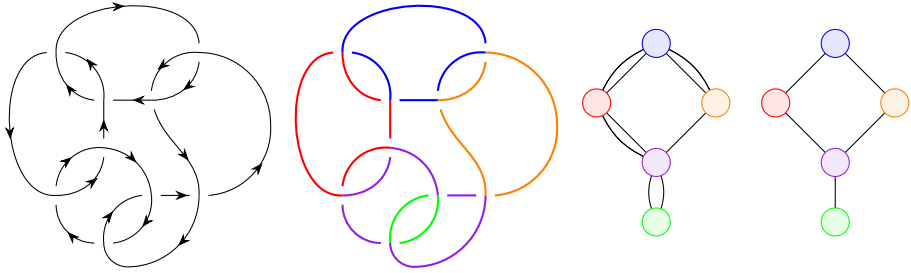

	\subsection{Balanced Diagrams}\label{Definitions}
	
	\begin{definition}\label{Balanced type j def}\label{Balanced type 1 def} 
		A \underline{\textbf{balanced type $\mathbf{j}$ diagram}} is a (non-split) type $j$ link diagram $D$ in which for each pair $v,w$ of $A$-circles, exactly one of the following is true:
		\begin{enumerate}
			\item $v$ and $w$ share $0$ crossings, 
			\item $v$ and $w$ share exactly $1$ crossing, and the edge in the reduced $A$-state graph of $D$ corresponding to that crossing is part of a cycle, or 
			\item $v$ and $w$ share exactly $2$ crossings, and the edge in the reduced $A$-state graph corresponding to those crossings is not part of a cycle. 
		\end{enumerate}
	\end{definition}
	
	The example in Figure \ref{reduced A-state graph example} is a balanced type $0$ diagram, and Figure \ref{fig:Balanced_type_1_example_2} shows a balanced type $1$ diagram.
	
	\input{Balanced_type_1_example_2}

We observe that for any type $j$ diagram $D$, a balanced type $j$ diagram $D_{bal}$ can be obtained by simply smoothing $m$ crossings, for some non-negative integer $m$. Observe that $D$ and $D_{bal}$ will have the same reduced $A$-state graphs. For example, compare the reduced $A$-state graphs shown in Figure \ref{fig:Burdened type 1 example} and Figure \ref{fig:Balanced_type_1_example_2}.

		\begin{definition}\label{def of burdening number}
		The number of crossings that must be smoothed away from a type $j$ diagram to produce a balanced type $j$ diagram is called the \underline{\textbf{burdening number}}. We denote the burdening number by $m$. 
	\end{definition}
	
	The burdening number is the least upper bound on the number of crossings that must be smoothed in order to obtain a balanced diagram of any type. It is also the greatest number of crossings that can be smoothed away to result in a balanced diagram of the same type. 
	
	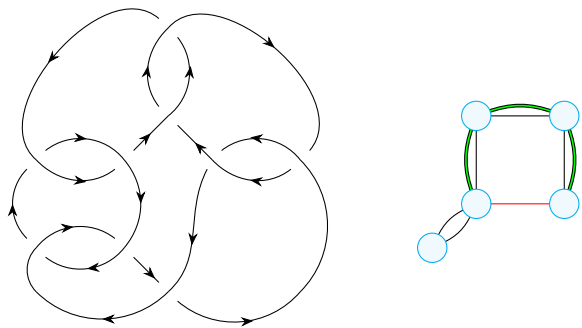
\begin{figure}[h]
    \centering
  \begin{tikzpicture}
      \begin{scope}[scale=0.65, >=Stealth]

      \begin{scope}[scale=0.9, xshift=-4cm]
       \node (a) at (-3,0){};
       \node (b) at (-1,0){};
       \node (c) at (0,3) {};
       \node (d) at (0,1){};
       \node (e) at (1,0) {};
       \node (f) at (3,0){};
       \node (g) at (3,-3) {};
       \node (h) at (0, -3){};
       \node (i) at (-3,-3){};
       \node (j) at (-3, -2){};
       \node (k) at (-1, -2){};
       

       \draw [out=90, in=90, relative, decoration={markings, mark=at position 0.5 with {\arrow{<}}}, postaction={decorate}] 
       (a.center) to (c);
       \draw [out=90, in=90, relative, decoration={markings, mark=at position 0.5 with {\arrow{>}}}, postaction={decorate}] 
       (c.center) to (f);
       \draw [out=45, in=135, relative, decoration={markings, mark=at position 0.5 with {\arrow{}}}, postaction={decorate}] 
       (f.center) to (g.center);
       \draw [out=45, in=135, relative, decoration={markings, mark=at position 0.5 with {\arrow{<}}}, postaction={decorate}] 
       (g.center) to (h);
       \draw [out=45, in=135, relative, decoration={markings, mark=at position 0.5 with {\arrow{>}}}, postaction={decorate}] 
       (h.center) to (i.center);
       \draw [out=45, in=135, relative, decoration={markings, mark=at position 0.5 with {\arrow{}}}, postaction={decorate}] 
       (i.center) to (j.center);
       \draw [out=45, in=135, relative, decoration={markings, mark=at position 0.5 with {\arrow{>}}}, postaction={decorate}] 
       (j) to (a);

       \draw [ decoration={markings, mark=at position 0.5 with {\arrow{>}}}, postaction={decorate}] 
       (b) to (d.center);
       \draw [ decoration={markings, mark=at position 0.5 with {\arrow{>}}}, postaction={decorate}] 
       (e.center) to (d);
       \draw [out=-30, in=170, relative, decoration={markings, mark=at position 0.5 with {\arrow{<}}}, postaction={decorate}] 
       (h.center) to (e);
       \draw [decoration={markings, mark=at position 0.5 with {\arrow{<}}}, postaction={decorate}] 
       (h) to (k);
       \draw [out=40, in=140, relative, decoration={markings, mark=at position 0.5 with {\arrow{>}}}, postaction={decorate}] 
       (b.center) to (k.center);


       \draw [out=45, in=135, relative, decoration={markings, mark=at position 0.5 with {\arrow{>}}}, postaction={decorate}] 
       (a) to (b.center);
       \draw [out=45, in=140, relative, decoration={markings, mark=at position 0.5 with {\arrow{<}}}, postaction={decorate}] 
       (b) to (a.center);
       \draw [out=45, in=140, relative, decoration={markings, mark=at position 0.5 with {\arrow{>}}}, postaction={decorate}] 
       (d) to (c.center);
       \draw [out=45, in=135, relative, decoration={markings, mark=at position 0.5 with {\arrow{<}}}, postaction={decorate}] 
       (c) to (d.center);
       \draw [out=45, in=140, relative, decoration={markings, mark=at position 0.5 with {\arrow{>}}}, postaction={decorate}] 
       (f) to (e.center);
       \draw [out=45, in=135, relative, decoration={markings, mark=at position 0.5 with {\arrow{<}}}, postaction={decorate}] 
       (e) to (f.center);
       \draw [out=45, in=135, relative, decoration={markings, mark=at position 0.5 with {\arrow{>}}}, postaction={decorate}] 
       (j.center) to (k);
       \draw [out=45, in=135, relative, decoration={markings, mark=at position 0.5 with {\arrow{>}}}, postaction={decorate}] 
       (k.center) to (j);
       \end{scope}



  \begin{scope}[scale = 0.9, xshift=4cm]
  
       \node (a) at (-1,1){};
       \node (b) at (1,1){};
       \node (c) at (1,-1) {};
       \node (d) at (-1,-1){};
       \node (e) at (-2, -2) {};
        \draw[]
        (a) -- (b)
        (b) -- (c) 
        (d) -- (a);
        \draw[red]
        (c) -- (d);
        \draw[out=20, in=160, relative]
        (a) to (b)
        (b) to (c)
        (d) to (a)
        (d) to (e)
        (e) to (d);
        \draw[out=20, in=160, relative, double=green]
        (a) to (b)
        (b) to (c)
        (d) to (a);

        \filldraw[color=cyan, fill=cyan!5] 
        (a) circle (1/3)
        (b) circle (1/3)
        (c) circle (1/3)
        (d) circle (1/3)       
        (e) circle (1/3);
        
       \end{scope}

       
       \end{scope}
       
  \end{tikzpicture}
    \caption{Smoothing the crossing that corresponds to the red edge will transform this type $1$ diagram into a balanced type $0$ diagram. Smoothing crossings corresponding to the three green edges will transform this type $1$ diagram into a balanced type $1$ diagram.  The burdening number of the diagram is $m=3$.}
    \label{fig:smoothing number vs burdening number}
\end{figure}

	\section{Structure of Main Argument}\label{structure of main argument} 
	
	From examining the Kauffman state sum model of the Jones polynomial, it becomes clear that for any link diagram of any link, the minimum degree of the Jones polynomial is bounded below by the degree of the contribution of the $A$-state \cite{Kauffman}. For positive diagrams in particular, the lowest degree term is contributed solely by the $A$-state, and can be neatly expressed in terms of the diagram's crossing number and number of $A$-circles. 
	
	\begin{equation}\label{min degree pos}
		\min \deg V_L = \frac{c(D) - A_D + 1}{2}
	\end{equation}
	
	Similarly, for any link diagram of any link, the maximum degree of the Jones polynomial is bounded above by the contribution of the $B$-state. For positive diagrams in particular, this bound can be neatly expressed in terms of the crossing number and number of $B$-circles. 
	
	\begin{equation}\label{standard bound}
		\max \deg V_L \leq c(D) + \frac{B_D -1 }{2}.
	\end{equation}
	
	\ref{min degree pos} and \ref{standard bound} are both specializations of a more general result appearing in Lickorish \cite{Lickorish}. 
	
	In this paper, we develop an upper bound on $c(D)+ \frac{B_D -1}{2}$ for links with type $1$ diagrams, essentially replacing all diagram-dependent quantities with diagram-independent quantities.

	In Section \ref{Balanced Diagrams and the Clasp Move} Theorem \ref{Bal type 1 B=n} we prove that in a \textit{balanced} type 1 diagram, the number of $B$-circles is equal to the number of link components. At the start of Section \ref{Type 1 Diagrams and the Burdening Number}, we prove an analogous result for an \textit{arbitrary} type 1 diagram. \\
	
	\noindent \textbf{Corollary \ref{bound on B-circles Bur}.} \textit{Let $D$ be a type 1 diagram with burdening number $m$ and $n$ link components. Then $B_D \leq n + 2m$. }\\

	At the end of Section \ref{Type 1 Diagrams and the Burdening Number} we find an expression for the burdening number of a type 1 diagram in terms of diagram-independent quantities and the length of the cycle in the diagram's reduced $A$-state graph.\\
	
	\noindent \textbf{Proposition \ref{m for coeff pm 1}.} \textit{Let $D$ be a type $1$ diagram of a link $L$. Then the burdening number $m$ can be expressed as 
		$$m = 4 \min \deg V_{L} - c(D) + k -2$$
		where $k$ is the length of the cycle in the reduced $A$-state graph of $D$. }\\

	Finally, in Section \ref{Conway Polynomial}, we find that the length of the cycle in a type $1$ diagram's reduced $A$-state graph is an invariant among all positive diagrams of the link.\\

	\noindent \textbf{Lemma \ref{lead coeff k-Bur}.} \textit{Let $D$ be a type $1$ diagram. Let $k$ be the length of the cycle in its reduced $A$-state graph. Then the leading coefficient of its Conway polynomial is $$\lead \coeff \nabla_D = \frac{k}{2}.$$}

	We now use these three statements to prove our main result. 

	\begin{theorem} \label{main result coeff 1}
		Let $L$ be a positive link with $n$ link components, Jones polynomial $V_L$, and Conway polynomial $\nabla_L$. If the second coefficient of $V_L$ is $\pm 1$, then  $$\max \deg V_L \leq 4 \min \deg V_L + \frac{n-1}{2} + 2\lead \coeff \nabla_L -2, $$
		where $\lead \coeff \nabla_L$ is the coefficient of the highest degree term of the Conway polynomial of link $L$.
	\end{theorem}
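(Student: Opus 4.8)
The plan is to combine the four displayed ingredients — equations \ref{min degree pos} and \ref{standard bound}, Corollary \ref{bound on B-circles Bur}, Proposition \ref{m for coeff pm 1}, and Lemma \ref{lead coeff k-Bur} — into a single chain of inequalities, after first reducing to a diagram to which all of them apply.

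First I would fix a positive diagram of $L$ and, by deleting nugatory crossings one at a time (each deletion preserves positivity and strictly lowers the crossing number), pass to a reduced positive diagram $D$ of $L$. Since $L$ is positive with second Jones coefficient $\pm 1$, Stoimenow's Theorem \ref{second Jones coeff counts holes} shows the reduced $A$-state graph of $D$ has cyclomatic number $1$; hence $D$ is a type $1$ diagram, so it has a well-defined burdening number $m$ and a unique cycle, of some length $k$, in its reduced $A$-state graph.

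Now I would substitute. Starting from \ref{standard bound} and feeding in $B_D \le n + 2m$ from Corollary \ref{bound on B-circles Bur},
\[
\max \deg V_L \ \le\ c(D) + \frac{B_D - 1}{2} \ \le\ c(D) + \frac{n-1}{2} + m .
\]
Replacing $m$ by the expression $4\min\deg V_L - c(D) + k - 2$ from Proposition \ref{m for coeff pm 1} makes the two $c(D)$ terms cancel, leaving
\[
\max\deg V_L \ \le\ 4\min\deg V_L + \frac{n-1}{2} + k - 2 .
\]
Finally, Lemma \ref{lead coeff k-Bur} gives $k = 2\lead\coeff\nabla_D$, and since the Conway polynomial is a link invariant $\nabla_D = \nabla_L$, so $k = 2\lead\coeff\nabla_L$ (note $\nabla_L \neq 0$ for a positive link, so the leading coefficient is meaningful); substituting yields the claimed inequality.

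The genuine work is all packaged in the three cited statements; within this proof the only points requiring care are that a positive link admits a reduced positive diagram to which Stoimenow's theorem applies, and that every diagram-dependent quantity — $c(D)$, $B_D$, $m$, and $k$ — either cancels or gets rewritten in terms of the link invariants $\min\deg V_L$, $\max\deg V_L$, $n$, and $\lead\coeff\nabla_L$. I do not anticipate a serious obstacle at this stage; the difficulty of the paper lives in establishing Corollary \ref{bound on B-circles Bur}, Proposition \ref{m for coeff pm 1}, and Lemma \ref{lead coeff k-Bur}.
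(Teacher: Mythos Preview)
Your proposal is correct and follows essentially the same route as the paper: fix a (reduced) positive diagram $D$, observe it is type $1$, and run the chain $\max\deg V_L \le c(D)+\frac{B_D-1}{2} \le c(D)+\frac{n-1}{2}+m = 4\min\deg V_L+\frac{n-1}{2}+k-2 = 4\min\deg V_L+\frac{n-1}{2}+2\lead\coeff\nabla_L-2$ via \ref{standard bound}, Corollary \ref{bound on B-circles Bur}, Proposition \ref{m for coeff pm 1}, and Lemma \ref{lead coeff k-Bur}. If anything you are slightly more careful than the paper, which simply asserts that a positive diagram of $L$ is type $1$ without explicitly passing to a reduced diagram first.
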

		
		\begin{proof}
			Let $L$ be a positive link with second Jones coefficient $\pm 1$. Let $D$ be a positive diagram of $L$. Then $D$ is a type $1$ diagram. Let $k$ be the length of the cycle in its reduced $A$-state graph. Then
			
				\begin{align*}
				\max \deg V_D &\leq c(D) + \frac{B_D - 1}{2} &&\text{ (by \ref{standard bound})}\\
				& \leq c(D) + \frac{n-1}{2} + m &&\text{ (by Corollary \ref{bound on B-circles Bur})}\\
				& = c(D) + \frac{n-1}{2}+ 4\min \deg V_L - c(D) + k - 2 &&\text{ (by Proposition \ref{m for coeff pm 1})}\\
				& = 4\min \deg V_L + \frac{n-1}{2} + 2\lead \coeff \nabla_L - 2 &&\text{ (by Lemma \ref{lead coeff k-Bur})}.
			\end{align*}
		\end{proof}
	
	Hence, it just remains to prove those three key pieces.

	\section{Balanced Diagrams and the Clasp Move}\label{Balanced Diagrams and the Clasp Move}
	
	In \cite{Buchanan_2022}, we found that in a balanced type $0$ diagram, the number of $B$-circles is equal to the number of link components. 
	
	\begin{theorem}\label{B=n for Balanced type 0}\cite{Buchanan_2022}
		Let $D$ be a balanced diagram of type $0$ with $n$ link components. Then $$B_D = n(D).$$
	\end{theorem}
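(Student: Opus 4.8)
The plan is to argue by induction on the number $A_D$ of $A$-circles, which for a positive diagram coincides with the number of Seifert circles. In the base case $A_D=1$ the reduced $A$-state graph is a tree with a single vertex and no edges, so $D$ has no crossings; hence $D$ is a diagram of the unknot and $B_D=1=n(D)$. For the inductive step, assume $A_D=s\ge 2$ and the statement for all balanced type $0$ diagrams with fewer $A$-circles. Since the reduced $A$-state graph is a tree it has a leaf $v$; let $w$ be its unique neighbor. By the balanced condition, and because a tree has no cycles, $v$ and $w$ share exactly two crossings $c_1,c_2$, and since $v$ is a leaf these are the only crossings incident to the Seifert circle $v$. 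The two strands meeting at $c_1$ and $c_2$ must run parallel, so that the local picture is a braid-like $\sigma_1^2$ tangle rather than an anti-parallel clasp: otherwise the oriented smoothing at $c_1$ would join $v$ and $w$ into one Seifert circle, contradicting $v\ne w$. Thus a neighborhood of $v$ consists of a crossing-free loop, one sub-arc of which twists twice around a strand of $w$.

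I now perform the \emph{clasp move}: replace the $\sigma_1^2$ tangle at $\{c_1,c_2\}$ by the trivial (crossingless) two-strand tangle. The Seifert circle $v$ thereby becomes a crossing-free loop disjoint from the rest of the diagram; deleting this split unknot yields a diagram $D'$. Then $D'$ is again a balanced type $0$ diagram with $A_{D'}=s-1$: its reduced $A$-state graph is the tree obtained by deleting the leaf $v$, which is still a tree, so that $p_1=0$; it is still positive; and since the move altered only the clasp at the leaf, it introduces no nugatory crossing and leaves every other pair of Seifert circles satisfying the balanced condition.

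It remains to compare $D$ with $D'$ in two ways. First, $B_D=B_{D'}+1$: reinstating the $\sigma_1^2$ tangle, with its inner strand closing up into $v$, and $B$-smoothing the two crossings produces exactly one new small circle, the bigon lying between $c_1$ and $c_2$, and otherwise merely reroutes the remaining strand without merging or splitting any $B$-circle. Second, $n(D)=n(D')+1$: tracing the link through the clasp, the crossing-free arc of $v$ together with the short arc of $w$ lying between $c_1$ and $c_2$ form a closed component $C$, clasped through the two positive crossings with the component running along $w$; performing the clasp move destroys $C$ — splicing the short $w$-arc into the latter component and absorbing the two arcs of $v$ into the deleted split unknot — and so decreases the number of link components by exactly one. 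Combining, $B_D=B_{D'}+1=n(D')+1=n(D)$ by the inductive hypothesis, which completes the induction.

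The crux is the identity $n(D)=n(D')+1$. A priori, removing a Hopf-band-type clasp could instead merge two link components into one, which would raise the genus of the Seifert surface and spoil the count; what rules this out is precisely the leaf hypothesis, which forces the arc of $v$ outside the clasp to carry no crossings and hence makes the component $C$ above a genuine small summand that is removed by the move. The remaining points to pin down — that the tangle at a leaf clasp is always of parallel $\sigma_1^2$ type, and that $D'$ really is a balanced type $0$ diagram — are routine once the leaf structure is exploited.
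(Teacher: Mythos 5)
The paper does not prove this statement at all --- it is imported verbatim from \cite{Buchanan_2022} --- so your argument has to stand entirely on its own, and its inductive step has a genuine gap. You assume that the two crossings $c_1,c_2$ shared by the leaf circle $v$ and its neighbour $w$ sit together in a disk as a two-strand $\sigma_1^2$ tangle, so that undoing them is a local tangle replacement that preserves strand connectivity, splits off a small component $C$ consisting of an arc of $v$ and a \emph{short} arc of $w$, and changes $B$ and $n$ each by exactly $-1$. The leaf hypothesis only forces the two arcs of $v$ between $c_1$ and $c_2$ to be crossing-free; it gives you no control over the arcs of $w$ between $c_1$ and $c_2$. When those arcs of $w$ pass through other crossings, the two crossings do not cobound a disk, the move is not a tangle replacement, and both of your counting identities fail. (Your ``parallel versus anti-parallel'' dichotomy is also not justified by the reason you give --- the oriented smoothings are what \emph{define} the Seifert circles, and either local configuration can join two distinct circles --- but that is secondary.)

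A concrete counterexample to the inductive step is the closure $D$ of the $3$-braid $\sigma_1\sigma_2\sigma_1\sigma_2$. This is a reduced, non-split positive diagram whose reduced $A$-state graph is the path on three vertices with each edge carrying exactly two crossings, hence a balanced type $0$ diagram; its underlying permutation is a $3$-cycle, so $n(D)=1$, and the all-$B$ state is the closure of $e_1e_2e_1e_2=e_1e_2$ in the Temperley--Lieb algebra, a single circle, so $B_D=1$ (consistent with the theorem). Take $v$ to be the Seifert circle of strand $3$, a leaf meeting $w$ (strand $2$) in the two $\sigma_2$ crossings; the arc of $w$ between them passes through a $\sigma_1$ crossing, so there is no component $C$ of the kind you describe --- indeed $D$ is a knot. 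Oriented-smoothing the two $\sigma_2$ crossings yields the closure of $\sigma_1^2$ plus a split unknot, so after discarding the unknot your $D'$ is the standard Hopf link diagram with $n(D')=B_{D'}=2$. Thus $n(D)=1\neq n(D')+1=3$ and $B_D=1\neq B_{D'}+1=3$: undoing the leaf clasp \emph{increased} both counts by one. Your chain $B_D=B_{D'}+1=n(D')+1=n(D)$ therefore breaks at both ends. The induction could only be rescued by proving that undoing a leaf clasp always changes $B_D$ and $n(D)$ by the \emph{same} amount (as happens in this example), which is a different statement requiring a genuinely new argument.
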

	
	At the end of this section we prove that this is also true for balanced type $1$ diagrams:\\
	
		\noindent \textbf{Theorem \ref{Bal type 1 B=n}.} \textit{Let $D$ be a balanced diagram of type $1$ with $n$ link components. Then the number of $B$-circles in $D$ is equal to the number of link components: 
			$$B_D = n(D).$$}
	
We will prove Theorem \ref{Bal type 1 B=n} by relating every balanced type $1$ diagram to a balanced type $0$ diagram via a sequence of \textit{clasp moves}. This clasp move preserves both the number of link components and the number of $B$-circles, so then Theorem \ref{Bal type 1 B=n} naturally follows from Theorem \ref{B=n for Balanced type 0}.
	
	\begin{remark}\label{even cycles}
		For a positive diagram, the $A$-state graph is the same as the Seifert graph. Since the Seifert graph of any link diagram is bipartite, and bipartite graphs do not have any odd cycles, the $A$-state graph of any positive diagram has only even cycles \cite{Cromwell_2004}. Hence the reduced $A$-state graph of a type $j$ diagram for $j\geq 1$ contains only even cycles, and must contain a cycle of length $\geq 4$. 
	\end{remark}

	\input{clasp_example_1}
	
	\subsection{Motivating the Clasp Move}\label{motivating the clasp move}
	
	Throughout the following, whenever we refer to \say{an arc} of a diagram, we mean a portion of a strand that goes between two crossings, so an arc ends when it reaches any crossing, not just an undercrossing.

	In Figure \ref{fig:clasp example 1} we see a balanced diagram of type $1$ that we transform into a balanced diagram of type $0$ by adding a clasp. Let $A_1, A_2, A_3, A_4$ be the $A$-circles of this diagram, with corresponding vertices $v_1, v_2, v_3, v_4$ forming a cycle in the $A$-state graph. We take arcs in the diagram that belong to some $A$-circles $A_1$ and $A_3$, and interlock them. What happened in the $A$-state graph? All edges of the form $(v_i, v_3)$ are now of the form $(v_i, v_1)$, and we have added two copies of the new edge $(v_1, v_3)$. This clasping did not change the number of $A$-circles in $D$ (or the number of vertices of $G$), but it did change how the circles (and their corresponding edges in $G$) are arranged relative to one another. This is shown in a little more generality in Figure \ref{fig:clasp move 0}, which demonstrates how we can view this process as one $A$-circle swallowing the other. This kind of clasping clearly does not change the number of link components. But more interestingly, we can see in Figure \ref{fig:clasp move 1} that this does not change the number of $B$-circles either. By adding more clasps we can kill a hole in a reduced $A$-state graph (decrease the cyclomatic number), and preserve the number of link components and number of $B$-circles as we do so. 
	
	\input{clasp_move_0}
	\input{clasp_move_1}
	
	Since the $A$-state graph will tell us information to help us classify a diagram as balanced or not, we would like to be able to think about performing \textit{clasp moves} directly from the information contained in the graph, and not have to start with a diagram.
	
	\subsection{The Clasp Move}\label{the clasp move}
	
	\begin{definition}
		In a connected graph, a \underline{\textbf{cut edge}} is an edge whose deletion disconnects the graph. An edge is a cut edge if and only if it is not part of a cycle. 
	\end{definition}
	
	Let $D$ be a reduced positive link diagram with the following properties: Every cut edge in the reduced $A$-state graph of $D$ corresponds to exactly two crossings in $D$, and every cycle edge in the reduced $A$-state graph corresponds to exactly one crossing in $D.$
	
	Suppose there is a path $(v_1, v_2, v_3)$ in the reduced $A$-state graph that is part of a cycle, and that $v_2$ has degree $2$. Then there are arcs $a_1$ and $a_3$ in the diagram $D$ (that are part of the $A$-circles corresponding to vertices $v_1$ and $v_3$) that can be clasped together with positive crossings. We saw this in Figure \ref{fig:clasp example 1}.
	
	Now, suppose that: (1) $(v_1, v_2, v_3)$ is part of a cycle in the reduced $A$-state graph, (2) Cutting edges $(v_1, v_2)$ and $(v_2, v_3)$ disconnects the graph, and (3) this disconnects it so that the component containing $v_2$ is a tree. So, this means that $v_2$ may not have degree $2$ in the graph, but from the perspective of any vertex in the graph that is not part of that tree rooted at $v_2$, $v_2$ might as well have degree $2$ -- every path from $v_2$ to any other vertex (that is not part of the tree) must use edge $(v_1, v_2)$ or $(v_2, v_3)$. Then, as before, we can clasp together arcs in the link diagram corresponding to $v_1$ and $v_3$. This is shown in Figure \ref{fig:v2 cycle degree 2}, and will be called a \textit{clasp move}.

	\begin{definition} \label{claspable def}
		A reduced positive link diagram $D$ is \textbf{\underline{claspable}} if it satisfies the following: \begin{enumalph} 
			\item Every cut edge in the reduced $A$-state graph of $D$ corresponds to exactly two edges in the $A$-state graph of $D$ (and thus also corresponds to exactly two crossings in $D$)
			\item Every cycle edge in the reduced $A$-state graph corresponds to exactly one edge in the $A$-state graph (and thus to exactly one crossing in $D$)
			\item The reduced $A$-state graph contains a cycle $(v_1, v_2, v_3, \dots, v_1)$ such that: \label{cycle condition}
			\begin{enumalph}
				\item Edges $(v_1, v_2)$ and $(v_2, v_3)$ form a cut set in the graph
				\item Cutting those two edges disconnects the graph in such a way that the component containing $v_2$ is a tree.
			\end{enumalph}
		\end{enumalph}
		
	\end{definition}
	
	\begin{definition} \label{clasp move def}
		We perform a \textbf{\underline{clasp move}} on a claspable diagram $D$ (and its associated $A$-state graph $G$) by first locating arcs $a_1$ and $a_3$ that are part of $A$-circles corresponding to vertices $v_1$ and $v_3$ in the cycle described in condition (c) such that arcs $a_1$ and $a_3$ bound the same region in the link diagram. Then we pull arcs $a_1$ and $a_3$ towards each other, and clasp them together with positive crossings. In the $A$-state graph, this clasp move corresponds to transferring all edges incident to $v_3$ to be incident to $v_1$, and then adding in two copies of the edge $(v_1, v_3)$.
	\end{definition}

	\input{v2_cycle_degree_2}

	\begin{proposition} \label{leaves dont matter}
		Let $D$ be a claspable diagram, and let $D'$ be the diagram obtained by performing a clasp move. Let $G$ be the $A$-state graph of $D$, and let $G'$ be that of $D'$. Then $G$ and $G'$ have the same vertex set. If some vertex $v_i$ is incident to only one vertex in $G$, then $v_i$ is still incident to only one vertex in $G'$. 
	\end{proposition}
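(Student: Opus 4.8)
The plan is to prove both assertions by directly following the effect of a single clasp move, relying on its diagrammatic description in Definition~\ref{clasp move def} and the induced operation it performs on the $A$-state graph.

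First I would dispatch the vertex-set claim. By Definition~\ref{clasp move def} a clasp move takes place inside a small disk in which the arcs $a_1$ and $a_3$, pieces of the $A$-circles corresponding to $v_1$ and $v_3$, are pulled together and clasped with two positive crossings, the diagram being unchanged outside this disk. Reading off the $A$-smoothing --- equivalently the Seifert smoothing --- of the two new positive crossings, as drawn in Figure~\ref{fig:clasp move 0}, one sees that no $A$-circle is split and no two are merged: each $A$-circle of $D$ gives rise to exactly one $A$-circle of $D'$, with the ``swallowing'' picture describing how the $A$-circles of $v_1$ and $v_3$ are rearranged. This yields a canonical bijection between the $A$-circles of $D$ and those of $D'$, so $G$ and $G'$ have the same vertex set, and the move acts on $G$ precisely as stated in Definition~\ref{clasp move def}: every edge $\{v_3, x\}$ becomes $\{v_1, x\}$, every edge not incident to $v_3$ is untouched, and two new edges $\{v_1, v_3\}$ are inserted.

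Next I would prove the leaf-preservation claim by a short case analysis. Suppose $v_i$ is incident in $G$ to a single vertex $w$, so every edge at $v_i$ has other endpoint $w \neq v_i$. I first note $v_i \notin \{v_1, v_3\}$: by Remark~\ref{even cycles} the cycle $(v_1, v_2, v_3, \dots, v_1)$ has length at least $4$, so on that cycle each of $v_1$ and $v_3$ is adjacent to two \emph{distinct} vertices, hence each is incident in $G$ to at least two distinct vertices and cannot be $v_i$. Now I split on whether $w = v_3$. If $w \neq v_3$, then no edge at $v_i$ is incident to $v_3$ (its endpoints $v_i, w$ both differ from $v_3$), so the move leaves every edge at $v_i$ alone and creates no new edge at $v_i$ (the new edges $\{v_1,v_3\}$ miss $v_i$); thus $v_i$ remains incident only to $w$ in $G'$. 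If $w = v_3$, then every edge at $v_i$ is $\{v_i, v_3\}$, which the move rewrites as $\{v_i, v_1\}$ with $v_1 \neq v_i$, and again no new edge lands on $v_i$, so $v_i$ becomes incident only to $v_1$ in $G'$. Either way $v_i$ is incident to exactly one vertex of $G'$.

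The only genuinely substantive point is the opening one: verifying that clasping two arcs from \emph{distinct} $A$-circles with two positive crossings neither creates nor destroys an $A$-circle, so that the diagrammatic clasp move really does induce a bijection on vertices and the stated edge surgery on $G$. This is a purely local computation with the two new crossings' $A$-smoothings --- exactly what Figure~\ref{fig:clasp move 0} is meant to display --- and once it is in place the remainder is the routine bookkeeping sketched above.
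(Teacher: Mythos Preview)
Your proof is correct and follows essentially the same approach as the paper's: both arguments use the description of the clasp move on $G$ (transfer edges at $v_3$ to $v_1$, then add two edges $\{v_1,v_3\}$) and then track what happens to a leaf vertex. The paper phrases the leaf claim as a contradiction argument while you give a direct case split on whether $w=v_3$; you are also slightly more careful in explicitly ruling out $v_i=v_1$ (the paper's sentence ``the only way a vertex can have a different set of neighbors \dots\ is if the vertex is $v_3$, or if it has $v_3$ as a neighbor'' tacitly omits $v_1$, which is harmless since $v_1$ lies on the cycle and so cannot be a leaf) and in spelling out the vertex-set claim via the local $A$-smoothing picture, which the paper leaves implicit in Definition~\ref{clasp move def}.
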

	
	\begin{proof}
		
		Suppose for contradiction that $v_i$ is incident to more than one vertex in $G'$. 
		The clasp move takes all edges in the $A$-state graph $G$ of the form $(v_3, v_j)$ and makes them edges of the form $(v_1, v_j)$ in $G'$, and all other edges remain exactly the same. 
		
		The only way a vertex can have a different set of neighbors in $G'$ than in $G$ is if the vertex is $v_3$, or if it has $v_3$ as a neighbor. Since by assumption $v_3$ is part of a cycle in the reduced $A$-state graph and $v_i$ is not, $v_i$ must be incident to $v_3$ in $G$. But then $v_i$ is only incident to $v_1$ in $G'$. 
	\end{proof}
	
	\begin{lemma}\label{clasp moves preserve B and n}
		Clasp moves do not change the number of link components or the number of $B$-circles in a positive diagram $D$: If $D'$ is obtained by performing a clasp move on $D$, then $B_{D'} = B_D$ and $n(D')=n(D)$. 
	\end{lemma}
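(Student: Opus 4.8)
The plan is to realize the clasp move as a purely local modification and then compare $D$ with $D'$ inside and outside a controlling disk. Fix a disk $\Delta$ so that $D$ and $D'$ coincide outside $\Delta$, and inside $\Delta$ the diagram $D$ consists only of the two crossing-free arcs $a_1$ and $a_3$ together with the region $R$ they co-bound, while $D'$ is obtained by replacing these by the clasped configuration of Definition \ref{clasp move def}, introducing two new positive crossings. Since nothing outside $\Delta$ changes, both $n(D')=n(D)$ and $B_{D'}=B_D$ reduce to a comparison of the tangle $D\cap\Delta$ with the tangle $D'\cap\Delta$; for the second identity this is a comparison of the all-$B$ states restricted to $\Delta$, the rest of the $B$-state being unaffected.

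For the statement about components I would simply trace the strands through the clasp: the two endpoints of $a_1$ on $\partial\Delta$ remain joined to one another in $D'$, and likewise the two endpoints of $a_3$. Hence the way the arcs of $D'$ close up into link components is identical to the way those of $D$ do, so $n(D')=n(D)$. (Equivalently, inserting a clasp is a pure-tangle replacement, so it can neither merge nor split link components.)

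The content of the lemma is the $B$-circle count, which I would get by computing the all-$B$ state inside $\Delta$ before and after the move. Before, $\Delta$ contributes exactly the two arcs $a_1,a_3$; after, it contributes the $B$-smoothing of the two-crossing clasp. Since $D'$ is still positive, at each new crossing the $A$-smoothing is the oriented (Seifert) smoothing, so the $B$-smoothing is the other one; performing this at both new crossings, as in Figure \ref{fig:clasp move 1} (and with the general local picture of the clasp as in Figure \ref{fig:clasp move 0}), shows that, according to the relative orientation of $a_1$ and $a_3$ along $R$, the part of the $B$-state inside $\Delta$ is either (i) left unchanged, or (ii) replaced by two arcs that re-join the four endpoints of $a_1$ and $a_3$ on $\partial\Delta$ in the opposite planar pairing, together with one additional closed circle. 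In case (i) there is nothing to check. In case (ii) I would split according to whether $a_1$ and $a_3$ lie on the same $B$-circle of $D$ or on two distinct ones: if they lie on two distinct $B$-circles $\beta\ne\beta'$, then the re-pairing fuses $\beta$ and $\beta'$ into a single circle while the extra closed circle restores the total, so $B_{D'}=B_D$; what remains is to rule out case (ii) occurring together with $a_1$ and $a_3$ lying on a single $B$-circle.

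That last point is the crux, and I expect it to be the main obstacle: it is exactly here that one must argue from the orientations of $a_1$ and $a_3$ — equivalently from the local picture at the corners of $R$, which is constrained by the cycle and cut-set hypotheses of Definition \ref{claspable def} — rather than from the pictures alone, in order to see that the "same $B$-circle" scenario always lands in case (i) (or in case (ii) with $\beta\ne\beta'$). Granting this, the scenarios depicted in Figure \ref{fig:clasp move 1} exhaust all possibilities, and in every one of them both counts are preserved, which is the lemma.
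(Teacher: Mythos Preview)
Your setup is essentially the same as the paper's: both arguments localize to a disk containing only the two arcs $a_1,a_3$ and compare the $B$-state inside before and after the clasp, referring to Figure~\ref{fig:clasp move 1}. The paper's entire proof is that figure; it simply exhibits the $B$-smoothing of the clasp in each of the possible outside-the-disk configurations and observes that the count is unchanged.

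Where your write-up falls short is that it is explicitly incomplete: you introduce a case (ii) (the $B$-smoothing re-pairs the four boundary points and produces an extra closed circle) and then say you cannot rule out that case landing on ``same $B$-circle'', writing ``Granting this''. But in fact case (ii) never occurs. If you actually carry out the $B$-smoothing at both new positive crossings of the clasp (as drawn in Figure~\ref{fig:clasp move 0}), you will see that the four boundary points are reconnected exactly as they were before the clasp --- the top pair to one another and the bottom pair to one another --- with no extra closed circle. The two $B$-smoothings cancel, so the $B$-state inside $\Delta$ is literally isotopic to what it was for $D$. This is what Figure~\ref{fig:clasp move 1} is showing: the four columns are not four genuinely different local outcomes, but four illustrations that the (single) local outcome preserves the count regardless of how the arcs close up outside $\Delta$.

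So the fix is short: drop case (ii) entirely, and replace your ``Granting this'' paragraph with the direct computation that $B$-smoothing the clasp returns the original pair of arcs. Then the argument is complete and coincides with the paper's.
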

	
	\begin{proof}
		Obviously, clasp moves do not change the number of link components in a diagram, and in Figure \ref{fig:clasp move 0} we saw that a clasp move preserves the number of $A$-circles. Now in Figure \ref{fig:clasp move 1}, we see that clasp moves also preserve the number of $B$-circles, regardless of whether the arcs belonged to the same $B$-circle or to different $B$-circles in the original diagram $D$.
	\end{proof}

	\subsection{Clasp Moves on a Balanced diagram of type $\mathbf{1}$}\label{clasp moves on bal type 1}
	
	\begin{proposition}\label{type 1 bal is claspable}
		Every type $1$ balanced diagram $D$ is claspable. 
	\end{proposition}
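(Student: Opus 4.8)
The plan is to verify directly the three conditions (a), (b), (c) of Definition \ref{claspable def} from the hypothesis that $D$ is balanced of type $1$. Write $G$ for the reduced $A$-state graph of $D$. Since $L$ is non-split, $G$ is connected, and since $D$ has type $1$ we have $p_1(G)=\#\text{edges}-\#\text{vertices}+1=1$, i.e.\ $\#\text{edges}=\#\text{vertices}$. A connected graph with cyclomatic number $1$ has a unique cycle $C$ (fixing a spanning tree, there is exactly one non-tree edge, whose fundamental cycle is the only cycle of $G$), and $G$ is obtained from $C$ by attaching possibly trivial trees at its vertices. By Remark \ref{even cycles}, $C$ has even length $\ell\geq 4$; write $C=(v_1,v_2,v_3,\dots,v_\ell,v_1)$.

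Conditions (a) and (b) are essentially restatements of the balanced-diagram axioms. Let $e=\{v,w\}$ be any edge of $G$. Then the $A$-circles $v$ and $w$ share at least one crossing, so alternative (1) of Definition \ref{Balanced type j def} fails for the pair $v,w$, and exactly one of (2), (3) holds. If $e$ is a cut edge of $G$, that is, $e$ does not lie on $C$, then (2) cannot hold, because (2) requires the reduced-graph edge $e$ to lie on a cycle; hence (3) holds, so $v$ and $w$ share exactly two crossings, i.e.\ $e$ comes from exactly two edges of the $A$-state graph. This is condition (a). If instead $e$ lies on $C$, then (3) cannot hold, so (2) holds, so $v$ and $w$ share exactly one crossing and $e$ comes from a single edge of the $A$-state graph. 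This is condition (b).

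For condition (c) I claim the cycle $C$ itself works, with distinguished path $(v_1,v_2,v_3)$ (these vertices are distinct since $\ell\geq 4$). The neighbours of $v_2$ in $G$ are $v_1$, $v_3$, and the roots of the trees attached to $G$ at $v_2$, so deleting the edges $(v_1,v_2)$ and $(v_2,v_3)$ leaves $v_2$ joined only to those pendant trees. Every edge of $C$ other than $(v_1,v_2),(v_2,v_3)$ joins two vertices among $v_1,v_3,v_4,\dots,v_\ell$, so after the deletion these vertices (and their pendant trees) stay mutually connected while $v_2$ is separated from them; thus $\{(v_1,v_2),(v_2,v_3)\}$ is a cut set, giving condition (c)(a). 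The component $T$ of $v_2$ consists of $v_2$ together with its pendant trees and contains no edge of $C$ (each such edge is either deleted or lies in the other component), so $T$ is a connected subgraph of the unicyclic graph $G$ with no cycle, i.e.\ a tree. This is condition (c)(b), so $D$ is claspable.

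The argument is mostly bookkeeping, and I do not expect a real obstacle: conditions (a) and (b) are immediate from the definition of balanced, and the only structural input is that a type-$1$ reduced $A$-state graph is unicyclic, which is precisely what makes $C$ satisfy (c) and, together with Remark \ref{even cycles}, guarantees a cycle of length $\geq 4$ is available. No planarity is needed for claspability itself; it is used only later, when the clasp move of Definition \ref{clasp move def} is actually performed in the diagram.
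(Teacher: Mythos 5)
Your proof is correct and is simply the fully written-out verification of Definition \ref{claspable def} that the paper compresses into ``this follows immediately from Definition \ref{Balanced type 1 def}'': conditions (a) and (b) are read off from the balanced dichotomy, and condition (c) from the fact that a connected type $1$ reduced $A$-state graph is unicyclic with an even cycle of length at least $4$. Same approach, just made explicit.
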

	
	\begin{proof}
		
		This follows immediately from Definition \ref{Balanced type 1 def}.
	\end{proof}

	\begin{theorem}\label{Bal type 1 B=n}
		Let $D$ be a balanced diagram of type $1$. Then the number of $B$-circles in $D$ is equal to the number of link components: 
		$B_D = n(D).$
	\end{theorem}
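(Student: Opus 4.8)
The plan is to induct on the length $k$ of the unique cycle in the reduced $A$-state graph of $D$, peeling off one ``hole'' at a time with a clasp move until we reach a balanced type~$0$ diagram, to which Theorem~\ref{B=n for Balanced type 0} applies. By Remark~\ref{even cycles}, $k$ is even and $k\ge 4$, so this recursion terminates.

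First I would set up the inductive step. By Proposition~\ref{type 1 bal is claspable}, $D$ is claspable, and since its reduced $A$-state graph is unicyclic I may take $v_1,v_2,v_3$ to be any three consecutive vertices on the cycle $(v_1,v_2,v_3,\dots,v_1)$, with $v_2$ in the middle; cutting $(v_1,v_2)$ and $(v_2,v_3)$ then disconnects the graph, and the component containing $v_2$ (namely $v_2$ together with whatever tree hangs off it) is a tree, so condition~(c) of Definition~\ref{claspable def} holds. Let $D'$ be the result of the clasp move of Definition~\ref{clasp move def} performed on this data. By Lemma~\ref{clasp moves preserve B and n} we have $B_{D'}=B_D$ and $n(D')=n(D)$, so it suffices to prove $B_{D'}=n(D')$.

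The crux is to show that $D'$ is again a balanced diagram, of type~$1$ with cycle length $k-2$ when $k>4$ and of type~$0$ when $k=4$. The clasp move relabels every edge incident to $v_3$ as incident to $v_1$ and introduces a double edge $(v_1,v_3)$, so I would verify conditions~(1)--(3) of Definition~\ref{Balanced type 1 def} edge by edge. The new edge $(v_1,v_3)$ is a cut edge carrying exactly two crossings. The former cycle edge $(v_1,v_2)$ merges with the transferred copy of $(v_2,v_3)$ into a cut edge carrying two crossings; each tree-pendant edge $(v_3,w)$ becomes a cut edge $(v_1,w)$ still carrying two crossings; the edges of the old cycle other than $(v_1,v_2),(v_2,v_3),(v_3,v_4)$ are untouched. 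If $k>4$ the transferred edge $(v_3,v_4)$ becomes a new edge $(v_1,v_4)$ carrying one crossing, lying on the surviving cycle $v_1 v_4 v_5\cdots v_k v_1$ of length $k-2$, so $D'$ is balanced of type~$1$; if $k=4$ it instead merges with the old edge $(v_1,v_4)$ into a cut edge carrying two crossings, killing the cycle, so $D'$ is balanced of type~$0$. By Proposition~\ref{leaves dont matter} and the explicit description of the move, no pair of $A$-circles becomes adjacent except the intended pair $\{v_1,v_3\}$, so in particular no pair ends up sharing three or more crossings; and since the clasp only adds two positive crossings between two distinct $A$-circles, $D'$ remains non-split and reduced. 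The induction then closes: for $k=4$, $D'$ is balanced of type~$0$ and Theorem~\ref{B=n for Balanced type 0} gives $B_{D'}=n(D')$; for $k>4$, the inductive hypothesis applied to $D'$ gives $B_{D'}=n(D')$. Either way $B_D=B_{D'}=n(D')=n(D)$.

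The main obstacle I anticipate is precisely the bookkeeping in the third paragraph: checking case by case (on the length $k$ and on the tree attached at $v_2$ and $v_3$) that the clasp move outputs a genuinely \emph{balanced} diagram rather than merely a claspable one, that the reduced $A$-state graph loses exactly one unit of cyclomatic number, and that non-splitness and reducedness are preserved. Everything else is a direct appeal to the already-established statements.
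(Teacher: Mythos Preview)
Your proposal is correct and follows essentially the same approach as the paper: induct on the cycle length $k$, use Proposition~\ref{type 1 bal is claspable} to perform a clasp move, invoke Lemma~\ref{clasp moves preserve B and n} to preserve $B_D$ and $n(D)$, and land either in a balanced type~$1$ diagram with cycle length $k-2$ or (when $k=4$) in a balanced type~$0$ diagram to which Theorem~\ref{B=n for Balanced type 0} applies. The paper asserts the balancedness of the clasped diagram by reference to Figures~\ref{fig:clasp example 1} and~\ref{fig:clasping k-balanced}, whereas you spell out the edge-by-edge bookkeeping explicitly; this extra detail is sound and the anticipated obstacle you flag is exactly the one the paper leaves to pictures.
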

	
	\begin{proof}
		Let $G_A'$ be the reduced $A$-state graph of $D$. Since $D$ is a balanced type $1$ diagram, $G_A'$ contains exactly one cycle. Let $k$ be the length of that cycle. By \ref{even cycles}, $k$ is even. 
		
		We know that $D$ is claspable, and we can perform a clap move at any $(v_1, v_2, v_3)$ path segment of the cycle. If $k=4$, then after the clasp move we have a balanced diagram of type $0$, as seen in Figure \ref{fig:clasp example 1}. If instead $k\geq 6$, then after the clasp move we have a new balanced diagram of type $1$, whose sole cycle in the reduced $A$-state graph is of length $k-2$. This is seen in Figure \ref{fig:clasping k-balanced}. By Proposition \ref{type 1 bal is claspable}, this new diagram is also claspable. 
		
		Then by induction on $k$, for any balanced type $1$ diagram $D$ we can perform $\frac{k-2}{2}$ clasp moves and obtain a balanced type $0$ diagram $D'$. 
		
		Thus 
		\begin{align*}
			B_D &= B_{D'} \text{ by Lemma \ref{clasp moves preserve B and n}},\\
			& = n(D') \text{ by Theorem \ref{B=n for Balanced type 0}, since $D'$ is balanced type $0$},\\
			& = n(D) \text{ by Lemma \ref{clasp moves preserve B and n}}.
		\end{align*}
		
	\end{proof}
	
	\begin{figure}[h]
    \centering
  \begin{tikzpicture}
      \begin{scope}[scale=0.9, >=Stealth]
       
  \begin{scope}[scale = 0.6, xshift=-8cm]
  
       \node (a) at (-3,1){};
       \node (b) at (-1,1){};
       \node (c) at (1,1) {};
       \node (d) at (3,1){};
       \node (e) at (3, -1) {};
       \node (f) at (1, -1) {};
       \node (g) at (-1, -1) {};
       \node (h) at (-3, -1) {};
        \draw[]
        (a) -- (b)
        (b) -- (c) 
        (c) -- (d)
        (d) -- (e)
        (e) -- (f)
        (f) -- (g)
        (g) -- (h);
        \draw[dashed]
        (h) -- (a);
        
        \filldraw[color=cyan, fill=cyan!5] 
        (a) circle (1/3)
        (b) circle (1/3)
        (c) circle (1/3)
        (d) circle (1/3)       
        (e) circle (1/3)
        (f) circle (1/3)
        (g) circle (1/3)
        (h) circle (1/3); 
        
       \end{scope}

\begin{scope}[scale = 0.6, xshift=0cm]
  
       \node (a) at (-3,1){};
       \node (b) at (-1,1){};
       \node (c) at (1,1) {};
       \node (d) at (3,1){};
       \node (e) at (3, -1) {};
       \node (f) at (1, -1) {};
       \node (g) at (-1, -1) {};
       \node (h) at (-3, -1) {};
        \draw[]
        (c) -- (f)
        (a) -- (b)
        (b) -- (c) 
        (c) -- (d)
        (f) -- (g)
        (g) -- (h);
        \draw[dashed]
        (h) -- (a);
        \draw[out=20, in=160, relative]
        (c) to (d)
        (e) to (c)
        (c) to (e);
        
        \filldraw[color=cyan, fill=cyan!5] 
        (a) circle (1/3)
        (b) circle (1/3)
        (c) circle (1/3)
        (d) circle (1/3)       
        (e) circle (1/3)
        (f) circle (1/3)
        (g) circle (1/3)
        (h) circle (1/3); 
        
       \end{scope}

\begin{scope}[scale = 0.6, xshift=8cm]
  
       \node (a) at (-3,1){};
       \node (b) at (-1,1){};
       \node (c) at (1,1) {};
       \node (d) at (3,1){};
       \node (e) at (3, -1) {};
       \node (f) at (1, -1) {};
       \node (g) at (-1, -1) {};
       \node (h) at (-3, -1) {};
        \draw[]
        (b) -- (g)
        (a) -- (b)
        (b) -- (c) 
        (c) -- (d)
        (g) -- (h);
        \draw[dashed]
        (h) -- (a);
        \draw[out=20, in=160, relative]
        (b) to (c)
        (c) to (d)
        (e) to (c)
        (c) to (e)
        (f) to (b)
        (b) to (f);
        
        \filldraw[color=cyan, fill=cyan!5] 
        (a) circle (1/3)
        (b) circle (1/3)
        (c) circle (1/3)
        (d) circle (1/3)       
        (e) circle (1/3)
        (f) circle (1/3)
        (g) circle (1/3)
        (h) circle (1/3); 
        
       \end{scope}
       
       \end{scope}
       
  \end{tikzpicture}
  \vspace{-8pt}
    \caption{Performing successive clasp moves}
    \label{fig:clasping k-balanced}
\end{figure}
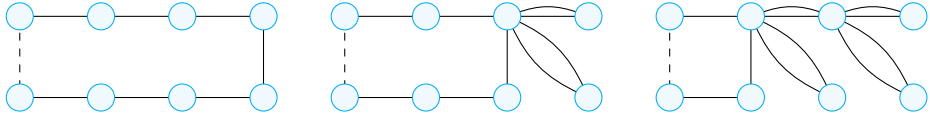

	\section{Type 1 Diagrams and the Burdening Number}\label{Type 1 Diagrams and the Burdening Number}
	
	In this section, we generalize Theorem \ref{Bal type 1 B=n}. Recall from Defintion \ref{def of burdening number} the concept of the \textit{burdening number} $m$: any arbitrary type $j$ diagram can be transformed into a balanced type $j$ diagram by performing a sequence of $m$ smoothings on select crossings. We now track the effect of those smoothings on the number of $B$-circles in a diagram. 
	
	\begin{corollary} [Corollary to Theorem \ref{Bal type 1 B=n}]
		\label{bound on B-circles Bur}
		Let $D$ be a type $1$ diagram with burdening number $m$.  Then 
		$$B_D \leq n(D) + 2m.$$
	\end{corollary}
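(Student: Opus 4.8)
The plan is to realise $D$ as one end of a chain of $m$ elementary smoothings terminating at a balanced type $1$ diagram $D_{bal}$, apply Theorem \ref{Bal type 1 B=n} to $D_{bal}$, and then track how the two quantities $B$ and $n$ drift along the chain. First I would pin down what kind of smoothings occur. By Definition \ref{def of burdening number} (and the remark preceding it) the diagram $D_{bal}$ obtained by smoothing $m$ crossings has the same reduced $A$-state graph as $D$. A $B$-smoothing at a crossing, however, freezes that crossing with its $B$-resolution, which merges the two incident $A$-circles and therefore collapses (or otherwise alters) the reduced $A$-state graph; so no balancing step can be a $B$-smoothing. Hence each of the $m$ steps is the $A$-smoothing (equivalently the Seifert smoothing, since $D$ is positive) of a crossing lying on an over-represented edge of the $A$-state graph.

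Next I would analyse a single such step $E \rightsquigarrow E'$. Because the crossing is resolved by its $A$-smoothing, the $A$-state is unchanged, so $A_{E'} = A_E$ and the reduced $A$-state graph is unchanged (the smoothed edge was a duplicate). For the $B$-count: the $B$-state of $E'$ is exactly the $B$-state of $E$ with that one crossing re-resolved from its $B$- to its $A$-smoothing, and re-resolving a single crossing in a system of circles changes the number of circles by exactly $\pm 1$; hence $|B_{E'}-B_E| = 1$. For the component count: since $A$-smoothing in a positive diagram is the oriented (Seifert) smoothing, the standard local picture at a crossing (four arc-ends inside a disk, joined up in two ways by the rest of the diagram) gives $|n(E') - n(E)| = 1$ as well.

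Iterating over the $m$ steps then gives $B_D \le B_{D_{bal}} + m$ and $n(D_{bal}) \le n(D) + m$. Since $D_{bal}$ is a balanced type $1$ diagram, Theorem \ref{Bal type 1 B=n} yields $B_{D_{bal}} = n(D_{bal})$, and combining these,
\[ B_D \le B_{D_{bal}} + m = n(D_{bal}) + m \le n(D) + 2m, \]
which is the assertion.

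The one step I expect to require care is the first: justifying that every balancing smoothing must be an $A$-smoothing, since the clean "$\pm 1$ per step" bookkeeping for both $B$ and $n$ rests on this. The remaining ingredients are routine local surgery counts. I would also note in passing that the intermediate diagrams stay positive and of type $1$: removing a crossing from a positive diagram leaves it positive, and the reduced $A$-state graph — hence the cyclomatic number — is preserved throughout, so the hypotheses of Theorem \ref{Bal type 1 B=n} genuinely apply at the end.
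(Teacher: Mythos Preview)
Your proposal is correct and follows essentially the same route as the paper's proof: smooth $m$ crossings to reach a balanced type~$1$ diagram $D_{bal}$, use that each smoothing changes both $B$ and $n$ by $\pm 1$, and then invoke Theorem~\ref{Bal type 1 B=n} to chain the inequalities $B_D \le B_{D_{bal}} + m = n(D_{bal}) + m \le n(D) + 2m$. Your write-up is in fact more careful than the paper's on two points the paper leaves implicit: that the balancing smoothings are necessarily $A$-smoothings (forced by preservation of the reduced $A$-state graph), and that the intermediate diagrams remain positive type~$1$ so that the theorem applies at the terminus.
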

	
	\begin{proof}
		By definition of burdening number, there exist $m$ crossings in $D$ such that smoothing those $m$ crossings results in balanced type $1$ diagram $D'$. Each smoothing changes the number of $B$-circles in the diagram by $\pm 1$ and changes the number of link components by $\pm 1$. So $|B_D - B_{D'}| \leq m $ and $|n(D) - n(D')| \leq m$. Then by Theorem  \ref{Bal type 1 B=n}, we can say that 
		\begin{align*}
			B_D &\leq B_{D'} + m \\
			& = n(D') + m \\
			&\leq n(D) + 2m.
		\end{align*}
	\end{proof}

	\begin{proposition}\label{m for coeff pm 1}
		Let $D$ be a type $1$ diagram of a link $L$. Then the burdening number $m$ can be expressed as 
		$$m = 4 \min \deg V_{L} - c(D) + k -2$$
		where $k$ is the length of the cycle in the reduced $A$-state graph of $D$. 
	\end{proposition}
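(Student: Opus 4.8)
The plan is to compute the number of crossings in a balanced type $1$ diagram obtained from $D$ directly from the reduced $A$-state graph, and then to solve for $m$ using \ref{min degree pos}.

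First I would set up notation. Let $G'$ be the reduced $A$-state graph of $D$, and let $D_{bal}$ be a balanced type $1$ diagram obtained from $D$ by smoothing $m$ crossings. As observed just before Definition \ref{def of burdening number}, $D$ and $D_{bal}$ have the same reduced $A$-state graph $G'$; in particular, since the vertices of the reduced $A$-state graph are exactly the $A$-circles, $A_{D_{bal}} = A_D$, which I abbreviate to $A$. Because $D$ is a non-split link diagram it is connected, so $G'$ is connected; because $D$ is a type $1$ diagram, the cyclomatic number is $p_1(G') = \#\text{edges} - \#\text{vertices} + 1 = 1$, so $G'$ has exactly $A$ edges. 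A connected graph of cyclomatic number $1$ contains a unique cycle, which here has length $k$: the $k$ edges lying on it are cycle edges, and each of the remaining $A - k$ edges is a cut edge.

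Next I would count the crossings of $D_{bal}$ using Definition \ref{Balanced type j def}: in a balanced type $1$ diagram each cycle edge of $G'$ corresponds to exactly one crossing, while each cut edge corresponds to exactly two crossings (a pair of $A$-circles sharing no crossing contributes no edge at all). Hence $c(D_{bal}) = k + 2(A-k) = 2A - k$. Since each of the $m$ smoothings used to pass from $D$ to $D_{bal}$ deletes exactly one crossing, $c(D_{bal}) = c(D) - m$, so $m = c(D) - 2A + k$. Finally, rearranging \ref{min degree pos} gives $A = A_D = c(D) + 1 - 2\min\deg V_L$, and substituting,
\[
m = c(D) - 2\bigl(c(D) + 1 - 2\min\deg V_L\bigr) + k = 4\min\deg V_L - c(D) + k - 2,
\]
which is the asserted formula.

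The computation is essentially bookkeeping, and I expect no genuine obstacle; the one point deserving care is the identity $A_D = A_{D_{bal}}$, i.e.\ that the $m$ smoothings leave the number of $A$-circles unchanged. This is precisely why the smoothings producing $D_{bal}$ are taken to be $A$-smoothings (equivalently, Seifert smoothings in the positive setting): performing an $A$-smoothing at a crossing leaves the $A$-state, and hence its set of circles, unchanged, and merely deletes the corresponding (duplicate) edge of the $A$-state graph, so that the reduced $A$-state graph really is preserved. I would record this observation explicitly before running the edge and crossing counts.
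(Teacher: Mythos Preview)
Your proposal is correct and follows essentially the same route as the paper: count the edges of the reduced $A$-state graph via its cyclomatic number, use the balanced condition to compute $c(D_{bal}) = 2A_D - k$, relate $c(D)$ to $c(D_{bal})$ via $m$, and then eliminate $A_D$ using \ref{min degree pos}. Your explicit justification that $A$-smoothings preserve $A_D$ is a nice addition that the paper leaves implicit.
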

	
	\begin{proof}
		
		We first consider a balanced type $1$ diagram $D$. Let $G_A$ be the $A$-state graph, and $G_A'$ the reduced $A$-state graph. Let $k$ be length of the cycle in $G_A'$. Then the number of edges in $G_A'$ is $A_D - 1 + 1 = A_D$. Since $D$ is balanced, every edge in $G_A'$ that is part of a cycle corresponds to one crossing in $D$, and every edge in $G_A'$ that is not part of a cycle corresponds to two crossings in $D$. So the number of crossings in $D$ in 
		
		\begin{equation}\label{number of crossings in k-Balanced}
			c(D) = 2(A_D - k) + k = 2A_D - k.
		\end{equation}
		
		Now let $D$ be an arbitrary type $1$ diagram whose reduced $A$-state graph has a single cycle of length $k$. The burdening number $m$ is the number of crossings in $D$ that must be smoothed to obtain a balanced type $1$ diagram $D'$ with the same reduced $A$-state graph. Then $c(D) = c(D') + m$ and $A_{D} = A_{D'}$. Thus 
		\begin{equation}\label{number of crossings in k-Burdened}
			c(D)=2A_D - k + m. 
		\end{equation}
		
		Since $D$ is a positive diagram, we have from \ref{min degree pos} that $4\min \deg V_L = 2(c(D) - A_D + 1)$. It follows from \ref{number of crossings in k-Burdened} that 
		\begin{align*}
			m & = c(D) - 2A_D + k \\
			& = 4\min \deg V_{L} - c(D) + k -2.
		\end{align*}
	\end{proof}
	
	\section{Conway Polynomial}\label{Conway Polynomial}
	
	Looking back at the proof of Theorem \ref{main result coeff 1}, we now have that the Jones polynomial of a positive link with a type 1 diagram must satisfy $\max\deg V \leq 4\min\deg V + \frac{n-1}{2} + k -2$. We have eliminated most of the diagram-dependent quantities from this bound, but one still remains: $k$, the length of the cycle in the reduced $A$-state graph of a type 1 diagram of the link. In this section we develop Lemma \ref{lead coeff k-Bur}, which says that the length of the cycle in the reduced $A$-state graph of a type $1$ diagram $D$ is equal to twice the leading coefficient of its Conway polynomial, and so is an invariant of the link.

	In Subsection \ref{on the leading coeff conway positive}, we gather whatever tools we can about positive and positive links, in preparation to describe the leading Conway coefficient of a generic positive link. We build up resources here that will always allow us to \say{prune away} any leaves (or other foliage), and instead find that we can compute the leading Conway coefficient just from the parts of the diagram that correspond to cycle edges in the reduced $A$-state graph. Everything in this subsection actually applies to any positive diagram, not just type 1 diagrams. 
	Subsection \ref{lead conway coeff coeff 1 section} specializes to the case of type 1 diagrams, and we prove Lemma \ref{lead coeff k-Bur}.
	
	\subsection{On the Leading Coefficient of the Conway Polynomial of a Positive Link}\label{on the leading coeff conway positive}

	In this subsection we exploit the Alexander-Conway Skein Relation to show that any two positive diagrams with the same reduced $A$-state graph will also have the same leading Conway coefficient. Since this part is not specific to type 1 diagrams, it will be useful for future arguments involving type $j$ diagrams for arbitrarily large values of $j$. 
	
	We use $\chi(D)$ to denote the Euler characteristic of the Seifert surface obtained from $D$ via Seifert's algorithm. Thus $\chi(D)$ is equal to the number of Seifert circles minus the crossing number of the diagram. $\chi(L)$ is the maximum value of the Euler characteristic of any Seifert surface of the link, meaning it is the Euler characteristic of a genus-minimizing Seifert surface of the link, and in general $\chi(D) \leq \chi(L)$. 
 
	Recall the Alexander-Conway Skein Relation: For diagrams $D_+, D_-,$ and $D_0$, which are identical except at one crossing which is positive in $D_+$, negative in $D_-$, and smoothed according to orientation in $D_0$, we have $$\nabla_+ - \nabla_- = z\nabla_0$$ where $\nabla_+, \nabla_-, \nabla_0$ are the Conway polynomials of $D_+, D_-, D_0$ (respectively). 
	
	The following propositions relate the Conway polynomial and Jones polynomial of a positive or almost-positive diagram $D$ to $\chi(D)$ and $\chi(L)$. 
	
	\begin{proposition}[Cromwell, \cite{Cromwell}]\label{Cromwell pos Conway degree}
		Let $L$ be a positive link with positive diagram $D$. Then 
		$$1- \chi(D) = 1-\chi(L) = \max\deg \nabla_L = 2\min\deg V_L.$$

	\end{proposition}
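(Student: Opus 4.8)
The plan is to split the four-term chain $1-\chi(D)=1-\chi(L)=\max\deg\nabla_L=2\min\deg V_L$ into one elementary computation on the Jones side and Cromwell's theorem on homogeneous (in particular positive) diagrams for everything else.

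\emph{The Jones equality.} By \ref{min degree pos}, $2\min\deg V_L = c(D) - A_D + 1$. Since $D$ is positive, its $A$-circles are exactly its Seifert circles, so $A_D$ equals the number $s(D)$ of Seifert circles; and by definition $\chi(D) = s(D) - c(D)$. Substituting, $2\min\deg V_L = c(D) - s(D) + 1 = 1 - \chi(D)$. This is the only genuinely new input needed.

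\emph{The middle equalities.} One has $1-\chi(D)\ge 1-\chi(L)$ because the Seifert-algorithm surface of $D$ is one particular Seifert surface of $L$ while $\chi(L)$ is the maximum; and $\max\deg\nabla_L\le 1-\chi(L)$ is standard, since $\nabla_L(z)=\det(t^{1/2}A-t^{-1/2}A^{T})$ with $z=t^{1/2}-t^{-1/2}$ for a Seifert matrix $A$ of a genus-minimizing surface, a matrix of size $1-\chi(L)$. So all that remains is the reverse inequality $\max\deg\nabla_L\ge 1-\chi(D)$. This inequality, together with the resulting genus-minimality $\chi(D)=\chi(L)$, is precisely Cromwell's theorem for positive diagrams \cite{Cromwell}: the Seifert-algorithm surface of a positive diagram realizes the genus, and the coefficient of $z^{1-\chi(L)}$ in $\nabla_L$ does not vanish (indeed all coefficients of $\nabla_L$ are nonnegative). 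I would invoke this directly, and then chain the pieces: $2\min\deg V_L = 1-\chi(D) = 1-\chi(L) = \max\deg\nabla_L$.

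\emph{The main obstacle.} If one wanted a self-contained argument in place of citing Cromwell, the hard part is exactly that reverse Conway inequality. Genus-minimality itself can be extracted from the Bennequin--Rudolph slice inequality, which for a positive diagram gives $\chi_4(L)\le\chi(D)$; since $\chi(D)\le\chi(L)\le\chi_4(L)$ always, this forces $\chi(D)=\chi(L)$. Pinning down $\max\deg\nabla_L$ then calls for a skein induction on $c(D)$, applied at a crossing lying on a cycle of the Seifert graph so that the oriented resolution $D_0$ remains positive and connected (the tree case being the unknot), using $\nabla_{D_+}=\nabla_{D_-}+z\nabla_{D_0}$. The delicate point is ruling out cancellation of the top-degree term between $z\nabla_{D_0}$ and $\nabla_{D_-}$: a naive degree bound on $\nabla_{D_-}$ does not suffice, so one must carry along the nonnegativity of the Conway coefficients of a positive link throughout the induction. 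I expect this no-cancellation bookkeeping to be the crux of any from-scratch proof.
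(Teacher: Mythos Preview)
The paper does not give a proof of this proposition at all; it simply attributes the result to Cromwell and cites \cite{Cromwell}. Your proposal is therefore not in conflict with anything in the paper, and in fact it does exactly what the paper implicitly does: identify the nontrivial content as Cromwell's theorem on positive (more generally homogeneous) diagrams and take the rest as standard.

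Your decomposition is correct. The equality $2\min\deg V_L = 1-\chi(D)$ follows from \ref{min degree pos} together with $A_D = s(D)$ and $\chi(D)=s(D)-c(D)$, as you say. The inequalities $1-\chi(D)\ge 1-\chi(L)$ and $\max\deg\nabla_L\le 1-\chi(L)$ are standard, and the remaining direction $\max\deg\nabla_L\ge 1-\chi(D)$ (hence $\chi(D)=\chi(L)$) is precisely what Cromwell proves. Your sketch of how a self-contained skein induction would go, with nonnegativity of the Conway coefficients carried along to prevent top-degree cancellation, is also an accurate summary of the mechanism behind Cromwell's argument. There is no gap here; you have supplied more explanation than the paper itself.
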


	\begin{proposition}[Stoimenow, \cite{Stoimenow2014MinimalGA}]\label{Stoimenow almost pos Conway degree min genus}
		Let $L$ be an almost-positive link. Then 
		$$1-\chi(L) = \max \deg \nabla_L = 2\min \deg V_L$$ 
	\end{proposition}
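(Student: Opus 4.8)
The plan is to pivot on the unique negative crossing of $L$, resolving it with the Conway and Jones skein relations so that the whole statement reduces to Cromwell's Proposition~\ref{Cromwell pos Conway degree} applied to two \emph{positive} diagrams. Fix an almost-positive diagram $D$ of $L$ with unique negative crossing $c$; let $D_+$ be the positive diagram obtained by switching $c$ and let $D_0$ be the diagram obtained by the oriented (Seifert) resolution at $c$. Both $D_+$ and $D_0$ are positive, the number $s$ of Seifert circles is the same for $D$, $D_+$ and $D_0$ (the Seifert smoothing at $c$ is already the oriented one), $c(D_+)=c(D)$, and $c(D_0)=c(D)-1$; hence $\chi(D_+)=\chi(D)$ and $\chi(D_0)=\chi(D)+1$. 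Recall also the classical Seifert bound $\max\deg\nabla_L\le 1-\chi(L)$ and the obvious $\chi(D)\le\chi(L)$, so the content is to produce matching lower bounds and to locate $\chi(L)$ relative to $\chi(D)$.

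First I would treat the Conway polynomial. The Alexander--Conway skein relation at $c$ gives $\nabla_L=\nabla_{D_+}-z\,\nabla_{D_0}$, and Proposition~\ref{Cromwell pos Conway degree} applied to $D_+$ and $D_0$ yields $\max\deg\nabla_{D_+}=1-\chi(D)$ and $\max\deg(z\,\nabla_{D_0})=1+(1-\chi(D_0))=1-\chi(D)$, both with positive leading coefficient. Thus the two top-degree terms land in the same slot, $\max\deg\nabla_L\le 1-\chi(D)$, with equality exactly when $\lead\coeff\nabla_{D_+}\ne\lead\coeff\nabla_{D_0}$. The job is then to compute the true top degree, which splits into (i) pinning down the genus defect $(1-\chi(D))-(1-\chi(L))$ and (ii) showing the extreme coefficient of $\nabla_L$ survives at degree $1-\chi(L)$. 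For (ii) I would deploy the leading-Conway-coefficient machinery of Subsection~\ref{on the leading coeff conway positive}: since the Seifert graph of $D_0$ is that of $D_+$ with the edge $c$ deleted, the pruning results there let one compute and compare $\lead\coeff\nabla_{D_+}$ and $\lead\coeff\nabla_{D_0}$ (and their next coefficients, should these agree) directly from the reduced $A$-state graphs, the key being that a term survives at precisely degree $1-\chi(L)$ whenever $L$ is not itself positive. For (i) I would invoke a genus lower bound for $L$ --- the signature obstruction $\sigma(L)<0$ for almost-positive links (\cite{przytycki2009almostpositive}), or, more sharply, the fact that almost-positive links are strongly quasipositive (\cite{Feller_2022}), so a quasipositive Bennequin surface is genus-minimizing and may be compared with the Seifert surface of $D$ --- to express $1-\chi(L)$ in terms of $s$ and $c(D)$.

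The Jones polynomial is handled in parallel, with an extra piece of luck. The Jones skein relation at $c$ gives $V_L=t^{-2}V_{D_+}-(t^{-1/2}-t^{-3/2})V_{D_0}$, and by Proposition~\ref{Cromwell pos Conway degree} the lowest-degree terms of $t^{-2}V_{D_+}$ and of $t^{-3/2}V_{D_0}$ both sit in degree $\tfrac12(1-\chi(D))-2$. But the extreme Jones coefficient of a positive link is $\pm1$, with a sign governed by the parity of (number of crossings) plus (number of Seifert circles); since $D_+$ and $D_0$ differ by one crossing, these extreme coefficients have opposite signs and cancel exactly, already forcing $2\min\deg V_L\ge (1-\chi(D))-2$. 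Tracking the next coefficient with Stoimenow's Theorem~\ref{second Jones coeff counts holes} --- which reads the second Jones coefficient of a positive link off the cyclomatic number of its reduced $A$-state graph, hence controls the comparison of $V_{D_+}$ and $V_{D_0}$ through the effect of deleting $c$ --- together with the genus computation from (i) should pin $2\min\deg V_L$ to the same value $1-\chi(L)$ as on the Conway side, completing the chain of equalities.

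The main obstacle is exactly this extreme-coefficient bookkeeping: one must show that the cancellation of the top (respectively bottom) contributions coming from $D_+$ and $D_0$ occurs in precisely the right amount for a \emph{genuinely} almost-positive link. This is the only point at which the hypothesis that $L$ is almost-positive --- not merely presented with one negative crossing --- is used: were $L$ positive, the cancellations would propagate further and the degrees would drop more, so the argument must detect non-positivity from the combinatorics of the reduced $A$-state graph alone, via Theorem~\ref{second Jones coeff counts holes} and the tools of Subsection~\ref{on the leading coeff conway positive}. A concrete alternative packaging of the same difficulty is an induction that simplifies the almost-positive diagram until the negative crossing is visibly essential and the Seifert surface of $D$ realizes $1-\chi(L)$ up to a determined defect, after which the skein computations above close the argument.
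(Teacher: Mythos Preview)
The paper does not prove this proposition at all: it is stated as a result of Stoimenow and cited from \cite{Stoimenow2014MinimalGA}, then used as a black box in the proof of Proposition~\ref{another crossing doesnt affect conway degree}. So there is no ``paper's own proof'' to compare against; your proposal is an attempt to supply an independent argument.

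As such an attempt, it has two serious problems. First, it is circular: you propose to ``deploy the leading-Conway-coefficient machinery of Subsection~\ref{on the leading coeff conway positive}'' to control the cancellation, but every result in that subsection past Proposition~\ref{Cromwell pos Conway degree} --- in particular Proposition~\ref{another crossing doesnt affect conway degree}, Corollary~\ref{another crossing doesnt affect leading term}, and Lemmas~\ref{smooth away, keep lead coeff} and~\ref{contract cut edges, same result} --- is proved \emph{using} Proposition~\ref{Stoimenow almost pos Conway degree min genus}. Second, and more fundamentally, the proposal is a plan rather than a proof: you correctly identify that the entire content lies in the ``extreme-coefficient bookkeeping'' that decides whether and how far the top terms of $\nabla_{D_+}$ and $z\nabla_{D_0}$ cancel, and in the genus defect $(1-\chi(D))-(1-\chi(L))$, but you do not actually carry out either computation. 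The genus defect is exactly the type~I/type~II dichotomy of Lemma~\ref{stoimenow's lemma}, itself a nontrivial result of Stoimenow proved alongside the proposition in question; invoking strong quasipositivity from \cite{Feller_2022} is a plausible alternative route to it, but you would still need to show that in the type~II case the top \emph{two} coefficients cancel in the Conway skein (and similarly for Jones), and nothing in your outline explains why that should happen. The honest summary is that your skein setup is the natural first move, but the actual theorem lives in the cancellation analysis you defer.
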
 
	
	The next two propositions involve considering two types of almost-positive diagrams: one type in which the negative crossing and some positive crossing both connect the same pair of Seifert circles, and another type in which no positive crossing connects the same pair of Seifert circles as the negative crossing. Discussion of these two separate situations appears in the work of Feller, Lewark, and Lobb (\cite{Feller_2022}, notions of \textit{parallel crossings} and \textit{type I and type II diagrams}); Ito and Stoimenow (\cite{Stoimenow, StI} notions of \textit{Seifert equivalent crossings} and \textit{type I and type II diagrams}, and \textit{good and bad crossings} and \textit{good successively $k$-almost positive diagrams}); and Tagami (\cite{Tagami_2014}). We recall that for positive crossings, performing an $A$-smoothing is the same as smoothing according to Seifert's algorithm, so in a positive diagram the $A$-state circles are exactly the same as the Seifert circles.
	
	\begin{lemma}[Stoimenow, \cite{StI}]\label{stoimenow's lemma}
		Let $L$ be a link represented by an almost-positive diagram $D$ with negative crossing $q$. If $D$ is of type $I$ (there is no other crossing $p$ which connects the same pair of Seifert circles as $q$), then $\chi(L) = \chi(D)$. If $D$ is of type $II$ (there is another crossing $p$ which connects the same pair of Seifert circles as $q$), then $\chi(L) - 2  = \chi(D)< \chi (L)$. 
	\end{lemma}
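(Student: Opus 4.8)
The plan is to apply the Alexander--Conway skein relation at the negative crossing $q$ and reduce the statement to Cromwell's degree formula (Proposition~\ref{Cromwell pos Conway degree}) together with the sharpness statement for almost-positive links (Proposition~\ref{Stoimenow almost pos Conway degree min genus}). Let $D_+$ be the diagram obtained from $D$ by switching $q$ to a positive crossing, and let $D_0$ be the oriented resolution of $D$ at $q$; since every crossing of $D$ other than $q$ is positive, both $D_+$ and $D_0$ are positive diagrams, and the skein relation reads
$$\nabla_D = \nabla_{D_+} - z\,\nabla_{D_0}.$$
Switching a crossing does not change the Seifert state, so $\chi(D_+)=\chi(D)$, while resolving $q$ leaves the Seifert circles unchanged and removes one crossing, so $\chi(D_0)=\chi(D)+1$. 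On the level of Seifert graphs, the Seifert graph of $D_+$ is that of $D$, and the Seifert graph of $D_0$ is obtained from it by deleting the edge corresponding to $q$.

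Next I would feed $D_+$ and $D_0$ into Proposition~\ref{Cromwell pos Conway degree}. Write $a_+$ for the leading coefficient of $\nabla_{D_+}$ and $a_0$ for that of $\nabla_{D_0}$, with the convention $a_0=0$ when $D_0$ represents a split link (so $\nabla_{D_0}=0$); one checks that $\nabla_{D_0}=0$ can occur only in type $I$. By Proposition~\ref{Cromwell pos Conway degree} and the positivity of the Conway polynomial of a positive link, $\max\deg\nabla_{D_+}=1-\chi(D)$ with $a_+>0$, and $\max\deg\bigl(z\,\nabla_{D_0}\bigr)=1+\bigl(1-\chi(D_0)\bigr)=1-\chi(D)$ with $a_0\ge 0$. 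Thus the two terms on the right-hand side of the skein relation have the same top degree $1-\chi(D)$. If $a_+\ne a_0$ then $\max\deg\nabla_D=1-\chi(D)$, while if $a_+=a_0$ the top terms cancel and (as $\nabla$ has terms in only one parity of degree) $\max\deg\nabla_D\le 1-\chi(D)-2$. Since Proposition~\ref{Stoimenow almost pos Conway degree min genus} gives $\max\deg\nabla_D=1-\chi(L)$ exactly, this yields $\chi(L)=\chi(D)$ when $a_+\ne a_0$ and $\chi(L)\ge\chi(D)+2$ when $a_+=a_0$.

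It then remains to match the two alternatives with types $I$ and $II$ and to control the order of cancellation in the second case. In type $II$ there is a positive crossing $p$ parallel to $q$, so deleting the edge of $q$ does not change the reduced $A$-state graph; since $D_0$ and $D_+$ are positive diagrams with the same reduced $A$-state graph, one shows that $a_0=a_+$ --- that is, the leading Conway coefficient of a positive diagram is unchanged by deleting a redundant parallel crossing. This is exactly the kind of pruning statement about leading coefficients developed in the remainder of Subsection~\ref{on the leading coeff conway positive}. In type $I$, $q$ is the only crossing joining its two Seifert circles, so either $\nabla_{D_0}=0$ (treated above) or the edge of $q$ lies on a cycle of the reduced $A$-state graph of $D_+$; since that graph is bipartite, its cycles are even of length $\ge 4$ (Remark~\ref{even cycles}), and deleting such a cycle edge strictly decreases the leading Conway coefficient, so $a_+>a_0$ and no cancellation occurs. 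Finally, in type $II$ one verifies that the cancellation is of order exactly $2$, so that $\max\deg\nabla_D=1-\chi(D)-2$: geometrically, the two oppositely twisted parallel bands of $D_+$ coming from $p$ and $q$ span a standard annulus in the Seifert surface, so $R(D)$ compresses exactly once; alternatively, unlike the leading coefficient, the subleading Conway coefficient of a positive diagram does detect the extra parallel crossing, so the residual term survives. Combining the cases gives $\chi(L)=\chi(D)$ in type $I$ and $\chi(L)=\chi(D)+2$ in type $II$ (the inequality $\chi(D)<\chi(L)$ in the type $II$ statement being then immediate).

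I expect the main obstacle to be precisely this last matching step: the careful bookkeeping of leading (and subleading) Conway coefficients of positive links that ties the cancellation to type $II$ and fixes its order. The skein relation, the Euler-characteristic count, and the two degree formulas are routine; the real content, as in Stoimenow's original treatment, is the combinatorics of how the leading coefficient of the Conway polynomial of a positive diagram depends on its reduced $A$-state graph --- which is also what the rest of this subsection is designed to supply.
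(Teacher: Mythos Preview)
The paper does not give its own proof of this lemma: it is quoted as a result of Stoimenow (from \cite{StI}) and used as a black box to prove Proposition~\ref{another crossing doesnt affect conway degree} and everything downstream in Subsection~\ref{on the leading coeff conway positive}. So there is no ``paper's proof'' to compare against, and your proposal should be read as an attempt to re-derive the cited result.

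That said, your argument as written is circular. You invoke ``the pruning statement about leading coefficients developed in the remainder of Subsection~\ref{on the leading coeff conway positive}'' to show $a_+=a_0$ in type~$II$; but Corollary~\ref{another crossing doesnt affect leading term} and Lemma~\ref{smooth away, keep lead coeff} are proved \emph{via} Proposition~\ref{another crossing doesnt affect conway degree}, which in turn is proved \emph{using} Lemma~\ref{stoimenow's lemma}. You cannot appeal to those results here. A similar concern applies to your use of Proposition~\ref{Stoimenow almost pos Conway degree min genus}: in Stoimenow's work the type~$I$/$II$ dichotomy is part of the machinery behind the genus formula for almost-positive links, so importing that proposition to prove the dichotomy risks the same circularity at one remove.

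Even if one grants the degree formula for almost-positive links as an independent input, two genuine gaps remain. In type~$I$ you assert that deleting a cycle edge of the reduced $A$-state graph \emph{strictly} decreases the leading Conway coefficient of a positive diagram; this is not established anywhere in the paper for general positive diagrams, and it is exactly the sort of statement whose proof would require the combinatorics you are trying to avoid. In type~$II$ you need the cancellation to be of order \emph{exactly} two; your geometric sketch (``compresses exactly once'') and the remark about subleading coefficients are both plausible but neither is a proof. A cleaner route for type~$II$, closer to Stoimenow's original argument, is to exhibit directly a positive diagram of $L$ with Euler characteristic $\chi(D)+2$ (by cancelling the parallel pair $p,q$) and then apply Proposition~\ref{Cromwell pos Conway degree}; this bypasses the Conway-coefficient bookkeeping entirely.
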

	
	\begin{proposition}\label{another crossing doesnt affect conway degree}
		Let $D_+$ be a positive diagram with one distinguished crossing $q$, and let $D_-$ be the result of making $q$ negative. If there is another crossing in $D_+$ connecting the same two $A$-circles as $q$, then $$\max \deg\nabla_+ = \max \deg \nabla_- + 2$$ where $\nabla_+$ is the Conway polynomial of $D_+$ and $\nabla_-$ is the Conway polynomial of $D_-$. 
	\end{proposition}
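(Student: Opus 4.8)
The plan is to bypass the skein relation and argue entirely through Euler-characteristic bookkeeping, leaning on Proposition \ref{Cromwell pos Conway degree}, Proposition \ref{Stoimenow almost pos Conway degree min genus}, and Lemma \ref{stoimenow's lemma}. The pivotal preliminary remark is that the oriented (Seifert) smoothing at a crossing is insensitive to which strand passes over, so $D_+$ and $D_-$ share the same Seifert state: the same collection of circles in the plane, the same crossing count, and the same incidences between crossings and those circles. Consequently $\chi(D_+) = \chi(D_-)$, and the hypothesized crossing $p \ne q$ — which in the positive diagram $D_+$ joins the same two $A$-circles as $q$, equivalently (since $A$-circles are Seifert circles in a positive diagram) the same two Seifert circles — still joins the same pair of Seifert circles as $q$ when viewed in the almost-positive diagram $D_-$. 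Thus $D_-$ is of type $II$ in the sense of Lemma \ref{stoimenow's lemma}.

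With that in hand the computation is short. First I would apply Proposition \ref{Cromwell pos Conway degree} to the positive diagram $D_+$ to obtain $\max\deg\nabla_+ = 1-\chi(D_+)$. Writing $L_-$ for the link represented by $D_-$, the type $II$ clause of Lemma \ref{stoimenow's lemma} gives $\chi(D_-) = \chi(L_-) - 2$. Since $D_-$ is an almost-positive diagram, Proposition \ref{Stoimenow almost pos Conway degree min genus} (or Proposition \ref{Cromwell pos Conway degree} again, in the edge case that $L_-$ also happens to be positive) yields $\max\deg\nabla_- = 1-\chi(L_-)$. Combining,
\begin{align*}
\max\deg\nabla_+ &= 1-\chi(D_+) = 1-\chi(D_-) \\
&= 1-\bigl(\chi(L_-)-2\bigr) = \bigl(1-\chi(L_-)\bigr)+2 = \max\deg\nabla_- + 2,
\end{align*}
which is the claim.

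The step I expect to require the most care is the passage asserting that $D_-$ genuinely falls under the type $II$ case of Lemma \ref{stoimenow's lemma}: one must verify cleanly that switching the sign of $q$ changes neither the family of Seifert circles nor the crossing-to-circle incidences, so that the existence of $p$ in $D_+$ really does transfer to $D_-$ with the same pair of Seifert circles. Everything after that is substitution. As a consistency check one can also run the Alexander--Conway triangle at $q$: the oriented resolution $D_0$ is a positive diagram whose Seifert state is that of $D_+$ with one fewer band, so $\chi(D_0) = \chi(D_+)+1$ and hence $\max\deg(z\nabla_0) = \max\deg\nabla_+$; then $\nabla_- = \nabla_+ - z\nabla_0$ indeed has strictly lower top degree, though this route alone does not pin the drop to exactly $2$, which is why the genus argument is the one to carry out.
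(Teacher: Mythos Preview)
Your proof is correct and follows essentially the same route as the paper's: both chain together Proposition~\ref{Cromwell pos Conway degree}, the equality $\chi(D_+)=\chi(D_-)$, the type~II case of Lemma~\ref{stoimenow's lemma}, and Proposition~\ref{Stoimenow almost pos Conway degree min genus} (or Proposition~\ref{Cromwell pos Conway degree} if $L_-$ is positive) in exactly the same order. If anything, you are more careful than the paper in spelling out why the Seifert state and the crossing--circle incidences are unchanged by the sign switch at $q$, which is precisely what justifies invoking the type~II clause.
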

	
	\begin{proof}
		
		For such $D_+$ and $D_-$, 
		$$	\max\deg \nabla_{+}  = 1 - \chi(D_+) = 1 - \chi(D_-) = 1 - \Big(\chi(L_-) - 2 \Big) = 1 - \chi(L_-) + 2 = \max \deg \nabla_{-} + 2
		$$
		where $L_-$ is the link represented by diagram $D_-$. The first equality is given by Proposition \ref{Cromwell pos Conway degree}, the third by Lemma \ref{stoimenow's lemma}, and the last equality follows from Proposition \ref{Cromwell pos Conway degree} if $L_-$ is a positive link or from Proposition \ref{Stoimenow almost pos Conway degree min genus} if $L_-$ is an almost-positive link.

	\end{proof}

	The preceding argument came from an anonymous reviewer. 
	
	As we will see, what this means for us is that if two crossings connect the same pair of $A$-circles, we can smooth one of them away and not change the degree or leading coefficient of the Conway polynomial. This also appears in the work of Stoimenow and Ito \cite{StI}.

	\begin{corollary}\label{another crossing doesnt affect leading term}
		Let $D_+$ be a positive diagram with one distinguished crossing $q$, let $D_-$ be the result of making $q$ negative, and let $D_0$ be the result of smoothing $q$. If there is another crossing in $D_+$ connecting the same two $A$-circles as $q$, then $$\lead \term \nabla_+ = z \lead \term \nabla_0.$$
		
	\end{corollary}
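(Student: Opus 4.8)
The plan is to read this off directly from the Alexander--Conway skein relation together with the degree statement of Proposition \ref{another crossing doesnt affect conway degree}. Since $D_+$, $D_-$, $D_0$ differ only at the crossing $q$ (positive, negative, smoothed respectively), the skein relation gives
$$\nabla_+ - \nabla_- = z\,\nabla_0.$$
The hypothesis is exactly the one needed for Proposition \ref{another crossing doesnt affect conway degree}, so $\max\deg\nabla_+ = \max\deg\nabla_- + 2$; in particular $\max\deg\nabla_- < \max\deg\nabla_+$.

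From here the argument is purely formal. Because $\nabla_-$ has strictly smaller degree than $\nabla_+$, subtracting it off cannot touch the top-degree term: $\lead\term(\nabla_+ - \nabla_-) = \lead\term\nabla_+$. On the other hand $\nabla_+ - \nabla_- = z\nabla_0$, and multiplication by $z$ simply raises every term's degree by one, so $\lead\term(z\nabla_0) = z\,\lead\term\nabla_0$. Comparing the two expressions for $\lead\term(z\nabla_0)$ yields $\lead\term\nabla_+ = z\,\lead\term\nabla_0$, which is the claim.

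The only point requiring a word of care is that $\lead\term\nabla_0$ should make sense, i.e. $\nabla_0 \neq 0$. This is automatic: $z\nabla_0 = \nabla_+ - \nabla_-$ has degree exactly $\max\deg\nabla_+$, and by Proposition \ref{Cromwell pos Conway degree} $\max\deg\nabla_+ = 2\min\deg V_{L_+} \geq 0$, so $z\nabla_0$ is a nonzero polynomial and hence $\nabla_0 \neq 0$ with $\max\deg\nabla_0 = \max\deg\nabla_+ - 1 \geq 0$. I do not anticipate any real obstacle here; the corollary is essentially just the observation that the lower-degree term $\nabla_-$ is invisible at the top degree, so there is nothing substantive left to prove beyond citing the two earlier results.
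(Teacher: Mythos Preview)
Your proof is correct and follows essentially the same route as the paper: invoke the skein relation, use Proposition~\ref{another crossing doesnt affect conway degree} to see that $\nabla_-$ has strictly smaller degree than $\nabla_+$, and read off the equality of leading terms. Your version merely spells out the formal step about leading terms in more detail and adds the (harmless) check that $\nabla_0 \neq 0$.
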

	
\begin{proof}
	
	The Alexander-Conway skein relation tells us that $\nabla_+ - \nabla_- = z\nabla_0$. Since $\max\deg \nabla_+ > \max\deg \nabla_-$ by Proposition \ref{another crossing doesnt affect conway degree}, we must have $\max\deg \nabla_+ = \max\deg(z\nabla_0)$. Furthermore, $\lead\term \nabla_+ = z \lead \term \nabla_0$.
\end{proof}

	\begin{lemma}\label{smooth away, keep lead coeff}
		Let $D$ be a positive diagram. Let $D_m$ be the result of adding $m$ crossings to $D$ such that for every intermediate diagram $D_i$ (for $0 < i\leq m$), the reduced $A$-state graph of $D_i$ is exactly the reduced $A$-state graph of $D$. Then $$\lead \term \nabla_{D_m} = z^m \lead \term \nabla_{D}.$$
	\end{lemma}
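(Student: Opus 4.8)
The plan is to induct on $m$, peeling off one added crossing at a time and invoking Corollary \ref{another crossing doesnt affect leading term} at each step. Write $D_0 = D, D_1, \dots, D_m$ for the intermediate diagrams, so that $D_i$ is obtained from $D_{i-1}$ by adding a single crossing $q_i$, and by hypothesis each $D_i$ is a positive diagram whose reduced $A$-state graph coincides with that of $D$ (hence with that of $D_{i-1}$).

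First I would record the structural consequence of the hypothesis. Adding the crossing $q_i$ to $D_{i-1}$ adds exactly one edge $e_i$ to the $A$-state graph. If this operation either created a new $A$-circle or joined a pair of $A$-circles not already connected by an edge, then the reduced $A$-state graph of $D_i$ would acquire a new vertex or a new edge, contradicting the assumption that it equals the reduced $A$-state graph of $D_{i-1}$. Hence $e_i$ is parallel to a pre-existing edge, i.e.\ $q_i$ connects the same pair of $A$-circles as some crossing $q_i'$ already present in $D_{i-1}$; moreover, the orientation-preserving smoothing of $q_i$ in $D_i$ returns the diagram $D_{i-1}$.

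Now the induction. I claim $\lead \term \nabla_{D_i} = z^i\, \lead \term \nabla_{D}$ for $0 \le i \le m$; for $i = 0$ this is trivial. Assuming it for $i-1$, apply Corollary \ref{another crossing doesnt affect leading term} with $D_+ = D_i$, distinguished crossing $q = q_i$, and (in the notation of that corollary) $D_0 = D_{i-1}$, the result of smoothing $q_i$. Its hypothesis is met: $D_i$ is positive, and by the previous paragraph $q_i'$ is another crossing of $D_i$ connecting the same two $A$-circles as $q_i$. Therefore $\lead \term \nabla_{D_i} = z\, \lead \term \nabla_{D_{i-1}} = z \cdot z^{i-1}\, \lead \term \nabla_D = z^i\, \lead \term \nabla_D$. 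Taking $i = m$ gives the lemma.

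The only point requiring care is the structural observation in the second paragraph: one must rule out that adding $q_i$ alters the set of $A$-circles or introduces a genuinely new adjacency between them. But this is exactly what the standing assumption forbids, since every intermediate reduced $A$-state graph is required to equal that of $D$, so no serious obstacle arises and the real content is carried entirely by Corollary \ref{another crossing doesnt affect leading term}.
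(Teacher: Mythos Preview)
Your proof is correct and follows essentially the same approach as the paper's own proof: induct on $m$, observe that preserving the reduced $A$-state graph forces each new crossing to be parallel to an existing one, and apply Corollary \ref{another crossing doesnt affect leading term} at each step. Your second paragraph spells out the structural observation in slightly more detail than the paper does, but the argument is otherwise identical.
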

	
	\begin{proof}
		
		We proceed by induction on $m$. If $m = 1$, then we have added a single crossing $q$ to $D$ to create $D_1$, and we have not changed the underlying reduced $A$-state graph structure. Therefore, there exists another crossing $p$ in $D_1$ that connects the same pair of $A$-circles as $q$. Letting $D_1$ play the role of $D_+$, and $D$ play the role of $D_0$, we have by Corollary \ref{another crossing doesnt affect leading term} that 
		$$\lead \term \nabla_{D_1} = z \lead \term \nabla_{D}.$$ 
		Now assume there is some value of $m$ for which the statement holds. Consider diagram $D_{m+1}$, the result of adding $m+1$ crossings to $D$. This can also be viewed as the result of adding $1$ crossing to some $D_m$ while preserving the underlying graph structure. Therefore, by the same argument as in the base case, 
		$$\lead \term \nabla_{D_{m+1}} = z \lead \term \nabla_{D_m}.$$ By inductive hypothesis, $\lead \term \nabla_{D_m} = z^m \lead \term \nabla_{D},$ so in total 
		$$\lead \term \nabla_{D_{m+1}} = z \Big( z^m \lead \term \nabla_{D} \Big) = z^{m+1} \lead \term \nabla_{D}.$$
	\end{proof}

	\begin{lemma}\label{contract cut edges, same result}
		Let $D$ be a positive link diagram with reduced $A$-state graph $G$. Let $G'$ be the result of contracting all cut edges in $G$. Then for any positive link diagram $D'$ whose reduced $A$-state graph is $G'$, 
		$$\lead \coeff \nabla_{D'} = \lead \coeff \nabla_{D}.$$
	\end{lemma}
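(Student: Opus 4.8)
The plan is to realize the contraction of each cut edge of $G$ by a move on diagrams that visibly preserves $\lead\coeff\nabla$, and then to iterate. The move is: smooth one of the (at least two) crossings carrying a cut edge, and then undo the Reidemeister~I kink that appears. Corollary~\ref{another crossing doesnt affect leading term} controls the smoothing step, and a Reidemeister~I move leaves the Conway polynomial unchanged, so the composite alters $\nabla$ only by a factor of $z$ and hence preserves the leading coefficient.

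In detail, I would first use Lemma~\ref{smooth away, keep lead coeff} to normalize $D$: smoothing away parallel crossings changes neither the reduced $A$-state graph nor $\lead\coeff\nabla$, so I may assume that in $D$ every cut edge of $G$ is carried by exactly two crossings and every cycle edge by exactly one (a cut edge carried by a single crossing would make that crossing nugatory, so two is the minimum for a reduced diagram). Fix a cut edge $e$ joining $A$-circles $C_1$ and $C_2$ via positive crossings $q_1,q_2$, and let $D_0$ be the result of smoothing $q_1$ according to orientation; since no strand is reversed, $D_0$ is again positive. Because $q_2$ is a second crossing of $D$ joining $C_1$ and $C_2$, Corollary~\ref{another crossing doesnt affect leading term} gives $\lead\term\nabla_D = z\,\lead\term\nabla_{D_0}$, hence $\lead\coeff\nabla_D=\lead\coeff\nabla_{D_0}$. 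In $D_0$ the circles $C_1$ and $C_2$ have merged into a single $A$-circle, and since $e$ was a cut edge there is a simple closed curve meeting $D_0$ only at $q_2$, so $q_2$ is nugatory; removing it by a Reidemeister~I move yields a reduced positive diagram $D_1$ with $\nabla_{D_1}=\nabla_{D_0}$ whose reduced $A$-state graph is $G$ with the edge $e$ contracted. Contracting an edge cannot turn a cycle edge into a cut edge, so no new nugatory crossings are created and the argument iterates over all cut edges, producing a positive diagram $\widehat D$ with reduced $A$-state graph $G'$ and $\lead\coeff\nabla_{\widehat D}=\lead\coeff\nabla_D$.

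What remains is to pass from this particular $\widehat D$ to an arbitrary positive diagram $D'$ realizing $G'$, i.e.\ to know that $\lead\coeff\nabla$ of a positive diagram is determined by its reduced $A$-state graph once that graph has no cut edges; this is where I expect the real difficulty to lie. After using Lemma~\ref{smooth away, keep lead coeff} to put both $\widehat D$ and $D'$ into the form with one crossing per edge of $G'$, one must rule out that inequivalent planar embeddings of the $2$-edge-connected bipartite graph $G'$, or different cyclic orders of the crossings along the $A$-circles, affect the leading coefficient. For the use made of this lemma in Subsection~\ref{lead conway coeff coeff 1 section} the graph $G'$ is a single even cycle, for which this normal form is the unique ``cyclic chain'' of $A$-circles, so the comparison is automatic there; in general one can observe that each two-crossing clasp coming from a cut edge is a Hopf band, so that $\nabla_D = z^{t}\,\nabla_{\widehat D}$ with $t$ the number of cut edges, which reduces the lemma to the cut-edge-free case — precisely the statement, announced at the start of this subsection, that $\lead\coeff\nabla$ depends only on the $2$-edge-connected core of the reduced $A$-state graph.
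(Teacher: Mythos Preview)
Your argument is correct and lands at the same place as the paper's, but the reduction step is organized differently. The paper normalizes more aggressively: using Lemma~\ref{smooth away, keep lead coeff} it passes to a diagram with \emph{one} crossing per edge of $G$, so that every cut edge is carried by a single crossing, which is then automatically nugatory. Undoing each such nugatory crossing by flipping the tangle on one side is an ambient isotopy, so it preserves the link type and the entire Conway polynomial, not just the leading coefficient; the effect on the reduced $A$-state graph is exactly the contraction of that cut edge. Your two-step ``smooth one of the pair via Corollary~\ref{another crossing doesnt affect leading term}, then Reidemeister~I the other'' accomplishes the same contraction but invokes the skein relation where the paper gets by with isotopy alone. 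The paper's route is slightly cleaner; yours makes the role of Corollary~\ref{another crossing doesnt affect leading term} more visible.

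You are right to flag the passage from the particular $\widehat D$ to an arbitrary $D'$ with reduced $A$-state graph $G'$ as the delicate point. The paper's proof dispatches this with the single sentence ``The result follows by Lemma~\ref{smooth away, keep lead coeff},'' which is really only asserting that both $\widehat D$ and $D'$ can be normalized to one crossing per edge of $G'$; it does not explicitly argue that two such normalizations must have the same leading Conway coefficient. As you observe, for the sole application in the paper (Lemma~\ref{lead coeff k-Bur}) the graph $G'$ is a single even cycle, whose one-crossing-per-edge realization is the standard $T(2,k)$ diagram, so no ambiguity arises there. Your identification of this as the substantive residual claim is accurate and, if anything, more careful than the paper's treatment.
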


	\begin{proof}
		
		Let $D$ be a positive link diagram whose reduced $A$-state graph $G$ has $t$ cut edges (edges that are not part of cycles).
		By Lemma \ref{smooth away, keep lead coeff}, it suffices to consider the case where every edge in $G$ corresponds to exactly one crossing in $D$. 
		
		Then there are $t$ nugatory crossings in $D$. For each such crossing, we can lift the overstrand up and over half of the diagram
		without changing any other part of the diagram, and the only effect this has on the reduced $A$-state graph is to contract the corresponding edge. This new diagram then represents the same link as the original and so has the same Conway polynomial, not just the same leading Conway coefficient. The result follows by Lemma \ref{smooth away, keep lead coeff}. 
	\end{proof}

	\begin{remark}
		This means that to find the leading Conway coefficient of a positive link $D$, we can:
		\begin{itemize}
			\item Draw its reduced $A$-state graph $G$, 
			\item Contract all cut edges to obtain a new graph $G'$,
			\item Draw a positive link diagram $D'$ whose reduced $A$-state graph is $G'$ and whose crossings are in one-to-one correspondence with the edges of $G'$, and then 
			\item Find the leading Conway coefficient for $D'$. 
		\end{itemize}
	\end{remark}

	\subsection{Leading Conway Coefficient for Balanced diagrams of type $1$} \label{lead conway coeff coeff 1 section}

	We begin with the simplest case. For any even integer $k$, the standard alternating diagram of the $T(2,k)$ torus link oriented so that it is positive and bounds an annulus, is a balanced diagram of type $1$. It is known that 
	\begin{proposition}\label{torus links conway polynomial}
		The Conway polynomial of the positive torus link $T(2,k)$ for even $k$ is $$\nabla_{T(2,k)} = \frac{k}{2}z.$$ 
	\end{proposition}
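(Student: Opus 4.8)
The plan is to give a short, self-contained argument using only facts already in the paper, namely Proposition \ref{Cromwell pos Conway degree} of Cromwell together with the classical fact that the coefficient of $z$ in the Conway polynomial of a two-component link equals the linking number of its two components. The base case is $k=2$: with the annulus orientation $T(2,2)$ is the positive Hopf link, whose Conway polynomial is $z=\tfrac22 z$ (one may equally start from $k=0$, the two-component unlink with Conway polynomial $0$).

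For general even $k$, I would fix the standard positive diagram $D$ of $T(2,k)$ that bounds an annulus: $D$ is the boundary of an annular band carrying $k$ half-twists, and since $k$ is even this band is orientable with two boundary circles running antiparallel along it. First I would compute $\chi(D)$. Running Seifert's algorithm on $D$ with this orientation resolves each half-twist by a turnback, and the resulting Seifert circles are exactly the $k$ flat untwisted pieces of the band lying between consecutive half-twists (so that the surface built is the annular band itself); thus $D$ has $k$ Seifert circles and $k$ crossings, and $\chi(D)=0$. By Proposition \ref{Cromwell pos Conway degree} this gives $\max\deg\nabla_{T(2,k)} = 1-\chi(D) = 1$. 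Since $T(2,k)$ has two components, its Conway polynomial has only odd-degree terms, so $\nabla_{T(2,k)} = c\,z$ for some integer $c$, and $c$ is the linking number of the two components. Because $k$ is even, the two boundary circles of the band are the two distinct link components, so each of the $k$ crossings of $D$ is a positive crossing joining the two components; hence the linking number is $k/2$ and $\nabla_{T(2,k)} = \tfrac k2 z$.

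An alternative is a direct induction on $k$ via the Alexander-Conway skein relation $\nabla_+-\nabla_-=z\nabla_0$ applied at one half-twist of $D$: switching the crossing and then cancelling a $\pm$ pair by a Reidemeister II move shows $D_-$ is the standard diagram of $T(2,k-2)$, while the oriented (turnback) resolution $D_0$ merges the two components into a single unknotted circle, so that $\nabla_{T(2,k)} = \nabla_{T(2,k-2)} + z = \tfrac{k-2}2 z + z = \tfrac k2 z$ by the inductive hypothesis. In either route the only step that is not routine bookkeeping is a picture argument about the boundary of a twisted band — identifying the $k$ Seifert circles in the first route, or verifying that $D_0$ is an unknot in the second — and I expect that to be the main (and only mild) obstacle; everything else is immediate from Proposition \ref{Cromwell pos Conway degree} and the linking-number interpretation of the linear Conway coefficient.
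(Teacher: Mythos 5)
Your proposal is correct, and in fact contains the paper's proof as its second route: the paper's argument is exactly the skein induction $\nabla_{T(2,k)} = \nabla_{T(2,k-2)} + z\,\nabla_{\mathrm{unknot}}$, with the observations that the oriented resolution of one crossing yields the unknot and that the crossing change (after a Reidemeister II cancellation) yields the standard diagram of $T(2,k-2)$. Your first route is genuinely different and also sound: for the annulus orientation the diagram has $k$ crossings and $k$ Seifert circles, so $\chi(D)=0$ and Proposition \ref{Cromwell pos Conway degree} gives $\max\deg\nabla=1$; parity for a two-component link forces $\nabla=cz$; and the classical identification of the linear Conway coefficient with the total linking number gives $c=k/2$ since all $k$ positive crossings are between the two components. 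The trade-off is that your first route imports one external classical fact (the linking-number interpretation of $a_1$) not stated in the paper, in exchange for avoiding the inductive picture-checking; the skein induction, by contrast, uses only the skein relation and elementary diagram moves, which is presumably why the paper prefers it. Either argument suffices, and the only point deserving care in your first route — that Seifert's algorithm on the annulus-oriented positive diagram produces exactly $k$ circles so that $\chi(D)=0$ — is consistent with the paper's own observation that this diagram is balanced of type $1$ with reduced $A$-state graph a $k$-cycle.
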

	\begin{proof}
		This can be seen using the Conway skein relation
		
		$$\nabla_{D+} - \nabla_{D_-} = z \nabla_{D_0}$$
		
		and the observation that smoothing one of the crossings gives the unknot, while changing that crossing gives an almost-positive diagram of the positive link $T(2,k-2)$. So 
		$$\nabla_{T(2,k)} = \nabla_{T(2,k-2)} + z = \dots = \underset{=z}{\underbrace{\nabla_{T(2,2)}}} + \Big(\frac{k}{2} - 1 \Big) z = \frac{k}{2}z.$$
	\end{proof}

	\begin{lemma}\label{lead coeff k-Bur}
		Let $D$ be a type $1$ diagram. Let $k$ be the length of the cycle in its reduced $A$-state graph. Then the leading coefficient of its Conway polynomial is $$\lead \coeff \nabla_D = \frac{k}{2}.$$
	\end{lemma}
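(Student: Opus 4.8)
The plan is to compute $\lead\coeff\nabla_D$ by passing to the reduced $A$-state graph, contracting its cut edges, and matching the result against the torus-link computation of Proposition~\ref{torus links conway polynomial}. Concretely: let $G$ be the reduced $A$-state graph of $D$. Since $D$ is type~$1$, $G$ is connected with cyclomatic number $1$, hence $\#\text{edges}=\#\text{vertices}$, so $G$ is unicyclic --- a single cycle of length $k$ (even, and $\geq 4$ by Remark~\ref{even cycles}) with trees attached along it. Its cut edges are precisely the non-cycle edges. A short graph-theoretic check shows that contracting a cut edge of a unicyclic graph leaves it connected, simple, and unicyclic and never merges two vertices of the cycle (both endpoints of a cut edge lying on the cycle would force that edge onto a cycle); therefore contracting \emph{all} cut edges of $G$ yields exactly the $k$-cycle $C_k$.

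Next I would invoke Lemma~\ref{contract cut edges, same result}, which gives $\lead\coeff\nabla_D=\lead\coeff\nabla_{D'}$ for any positive link diagram $D'$ whose reduced $A$-state graph is $C_k$, and then exhibit one such $D'$. The natural choice is the standard alternating diagram of the torus link $T(2,k)$ oriented so that it is positive and bounds an annulus --- the diagram already singled out just before Proposition~\ref{torus links conway polynomial}. With that orientation Seifert's algorithm produces $k$ Seifert circles arranged in a single necklace, consecutive circles sharing exactly one crossing, so its reduced $A$-state graph is $C_k$ with the $k$ crossings in bijection with the $k$ edges. Hence Lemma~\ref{contract cut edges, same result} applies, and by Proposition~\ref{torus links conway polynomial} we have $\nabla_{D'}=\tfrac{k}{2}z$, so $\lead\coeff\nabla_{D'}=\tfrac{k}{2}$, and therefore $\lead\coeff\nabla_D=\tfrac{k}{2}$, as claimed.

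The one step I expect to require genuine care is justifying that the torus-link diagram above really does realize $C_k$ as its reduced $A$-state graph, i.e.\ that the ``bounds an annulus'' orientation is the one for which Seifert smoothing returns the cyclic chain of $k$ small circles, rather than the two braid-axis circles of the parallel orientation (which would give a type~$0$ diagram instead). Everything else is bookkeeping: the unicyclic-graph observation is elementary, and Lemma~\ref{contract cut edges, same result} together with Proposition~\ref{torus links conway polynomial} carries the rest. As an alternative to citing the torus-link fact, one could run the Alexander--Conway skein relation directly on a minimal positive diagram realizing $C_k$, peeling off one cycle crossing at a time, but this merely re-derives Proposition~\ref{torus links conway polynomial}, so citing it is cleaner.
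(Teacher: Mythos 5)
Your proposal is correct and follows exactly the paper's route: contract all cut edges of the reduced $A$-state graph to obtain the $k$-cycle, identify that as the reduced $A$-state graph of the standard positive diagram of $T(2,k)$, and conclude via Lemma~\ref{contract cut edges, same result} and Proposition~\ref{torus links conway polynomial}. The extra care you devote to the unicyclic-graph contraction and to checking the orientation of the torus-link diagram is sound but only fills in details the paper leaves implicit.
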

	
	\begin{proof} 
		Let $D$ as above. If we contract all cut edges in its reduced $A$-state graph, we have a cycle graph with $k$ edges. This is the reduced $A$-state graph of the standard positive diagram of torus link $T(2,k)$. By Lemma \ref{contract cut edges, same result}, $\lead \coeff \nabla_D = \lead \coeff \nabla_{T(2,k)}$, which by Lemma \ref{torus links conway polynomial} is equal to $\frac{k}{2}$. 
	\end{proof}
	
	\printbibliography
	
\end{document}